\documentclass[12pt,a4paper]{article}

\usepackage{latexsym,amssymb,amsthm}
\usepackage{url,graphics}
\usepackage[dvips]{graphicx,epsfig}

\title{\bf  Spanning trees with many leaves:  lower bounds in terms of number of vertices of degree 1, 3 and at least~4}
\author{
         D.\,V.\,Karpov\thanks{This research was supported by Russian Foundation for Basic Research (RFBR) grant 11-01-00760-a.}
     \\[6pt]
          {\small E-mail: \texttt{dvk0@yandex.ru}       }
    }
\date{}

\begin{document}
\maketitle
\righthyphenmin=2
\renewcommand*{\proofname}{\bf Proof}
\newtheorem{thm}{Theorem}
\newtheorem{lem}{Lemma}
\newtheorem{cor}{Corollary}
\theoremstyle{definition}
\newtheorem{defin}{Definition}
\theoremstyle{remark}
\newtheorem{rem}{\bf Remark}

\def\N{{\rm N}}
\def\q#1.{{\bf #1.}}
\def\I{{\rm Int}}
\def\R{{\rm Bound}}
\def\mmin{\mathop{\rm min}}

\centerline{\sc Abstract}

We prove that every connected graph with $s$ vertices of degree~1 and~3 and~$t$ vertices of degree at least~4
has a spanning tree  with  at least ${1\over 3}t +{1\over 4}s+{3\over 2}$ leaves.
We present infinite series of graphs showing that our bound is tight.

\section{\bf Introduction. Basic notations}

We consider undirected graphs without loops and multiple edges and use standard notations. For a graph $G$ we denote the set of its vertices by  $V(G)$ and the set of its edges by $E(G)$. We use notations~$v(G)$ and~$e(G)$ for the number of vertices and edges of $G$, respectively.

For any set $E\subset E(G)$ we denote by~${G-E}$ the graph obtained from~$G$ after deleting edges of the set~$E$. For any set~$V\subset V(G)$ 
we denote by~$G-V$ the graph obtained from~$G$ after deleting vertices of the set~$V$ and all edges incident to deleted vertices.

As usual, we denote the {\it degree} of a vertex $x$ in the graph $G$ by $d_G(x)$. We denote the minimal vertex degree of the graph~$G$ by~$\delta(G)$. 
Let $\N_G(w)$ denote the {\it neighborhood}  of a vertex  $w\in V(G)$ (i.e. the set of all vertices of the graph~$G$, adjacent to~$w$).

For any edge $e\in E(G)$ we denote by $G\cdot e$ the graph, in which the ends of the edge $e=xy$ are {\it contracted} into one vertex, which is adjacent  to all different from~$x$ and~$y$ vertices, adjacent in~$G$ to at least one of the vertices~$x$ and~$y$. Let us say that the graph $G\cdot e$ is obtained from $G$ by {\it contracting} the edge $e$.

\begin{defin}
For any connected graph~$G$ we denote by~$u(G)$  the maximal number of leaves in a spanning tree of the graph~$G$. 
\end{defin}

\begin{rem}
Obviously, if  $F$ is a tree, then   $u(F)$ is the number of its leaves.
\end{rem}

Several papers about lower bounds on~$u(G)$ are published.  
One can see details of the history of this question in~\cite{KB}. We shall recall only results, directly concerned with our paper.

In 1981 N.\,Linial formulated a  conjecture:  
$$u(G)\ge {d-2\over d+1}v(G) + c \quad \mbox{as} \quad \delta(G)\ge d\ge 3,$$
 where a constant $c>0$ depends only on~$d$. The  ground  for this  conjecture is the  following:  for every  $d\ge 3$
one can easily construct infinite series of graphs with   minimal degree  $d$, for 
which  ${u(G)\over v(G)}$ tends to $d-2\over d+1$.

It follows from the papers~\cite{Alon,DJ,YW} that for $d$ large enough  Linial's conjecture fails.  However, we are interested in the case of  small~$d$.
 
In 1991 Kleitman and West~\cite{KW} proved, that $u(G)\ge {1\over 4}\cdot v(G)+2$ as~$\delta(G)\ge 3$ and~$u(G)\ge {2\over 5}\cdot v(G)+{8\over 5}$ as~$\delta(G)\ge 4$.
In~1996 Griggs and Wu~\cite{JGM} once again proved the statement for~$\delta(G)\ge4$
and proved, that~$u(G)\ge {1\over 2}\cdot v(G)+2$ as~$\delta(G)\ge 5$. 
Hence, Linial's conjecture holds for~$d=3$, $d=4$ and~$d=5$, for~$d>5$ the question remains open.

In~\cite{KW} a more strong Linial's conjecture was mentioned: 
   $$u(G)\ge \sum_{x\in V(G)} {d_G(x)-2\over d_G(x)+1}$$ 
for a connected graph~$G$ with $\delta(G)\ge 2$. 
Clearly, this conjecture is not true, since weak Linial's conjecture fails for large degrees.
However, strong Linial's  conjecture inspires attempts to obtain a lower bound on~$u(G)$, in which contribution of each vertex  depends on its degree. 

In~\cite{K2} D.\,V.\,Karpov has proved, that for a connected graph~$G$ with ${v(G)>1}$, $s$ vertices of degree~$3$ and~$t$ vertices of degree at least~$4$ the inequality  $u(G)\ge {2\over 5 } t + {1\over 5} s +2$ holds besides three graphs-exclusions.
In this paper vertices of degree~1 and~2 are allowed in the graph. Infinite series of examples show us that  this bound is tight.

In~\cite{KB, Ba} lower bounds on~$u(G)$ taking into account the vertices of degree~1 of the graph~$G$ were proved.
Let~$G$ be a connected graph with~$s$ vertices of degree not~2 and~$v(G)>1$.
In~\cite{KB} D.\,V.\,Karpov and A.\,V.\,Bankevich have proved that $u(G)\ge {1\over 4}s +{3\over 2}$.
In~\cite{Ba} A.\,V.\,Bankevich has proved that 
$u(G) \ge {1\over 3} s  + {4\over 3}$  for a triangle-free graph~$G$. All these bounds are tight, that is confirmed by infinite series
of examples.

 It is interesting to look  at our main result ~--- theorem~\ref{u134}~--- on this background.

\begin{thm}
\label{u134} 
Let~$G$ be a connected graph with~$s$ vertices of degree~$1$ and~$3$, ~$t$ vertices of degree at least~$4$ and~$v(G)>1$. 
Then $u(G)\ge {1\over 3} t + {1\over 4} s + {3\over2}$. 
\end{thm}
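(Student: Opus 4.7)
The plan is to argue by induction on $v(G)+e(G)$ and study a minimal counterexample $G$. Write $f(G)={1\over 3}t+{1\over 4}s+{3\over 2}$ for the quantity we need to majorize by $u(G)$.

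\textbf{Suppressing degree-$2$ vertices.} First I would show that $G$ has no vertex of degree~$2$ with distinct, non-adjacent neighbours: if $v$ has such neighbours $x,y$, contract the edge $xv$ to obtain $G'=G\cdot xv$. Since $v$ contributes $0$ to $f$ and the degrees of $x,y$ are unchanged in $G'$, we have $f(G')=f(G)$, and any spanning tree of $G'$ lifts to one of $G$ with the same number of leaves, contradicting minimality. Residual degree-$2$ configurations (triangles containing a degree-$2$ vertex, parallel edges that appear after contraction, short cycles) either fall into small base cases or are eliminated by a few ad~hoc variants of the same reduction.

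\textbf{Removing pendants.} For each degree-$1$ vertex $v$ with neighbour $w$ I pass to $G-v$ and track the change in $f$: it is $-{1\over 4}$ when $d_G(w)=2$, $-{1\over 2}$ when $d_G(w)=3$, $-{1\over 3}$ when $d_G(w)=4$, and $-{1\over 4}$ when $d_G(w)\ge 5$. A spanning tree of $G-v$ extended by the edge $vw$ gains one fresh leaf provided it can be chosen with $w$ internal, and this disposes of the cases $d_G(w)\in\{2,4\}$ and $d_G(w)\ge 5$ at once. The delicate case is $d_G(w)=3$, where we must produce two extra leaves rather than one; this forces a secondary split according to the configuration of the other two neighbours of $w$, often requiring a joint reduction that removes $v$, $w$ and another nearby vertex simultaneously.

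\textbf{Core case: minimum degree $\ge 3$.} After the reductions above, $G$ has only vertices of degree $3$ or $\ge 4$, and the statement reduces to the theorem of \cite{K2} (its three exceptional graphs are excluded here because they contain degree-$2$ or degree-$1$ vertices that would have been processed already, or else occur as genuine base cases). A maximum-leaf spanning tree $T$ satisfies $|L(T)|=v(G)-|I(T)|$; combining the identity $\sum_{x\in I(T)}d_T(x)=2(v(G)-1)-|L(T)|$ with the extremality constraint that every degree-$3$ internal vertex of $G$ also has $d_T=3$, and charging each degree-$3$ internal vertex by ${1\over 4}$ and each degree-$\ge 4$ internal vertex by ${1\over 3}$, gives the desired $|L(T)|\ge{1\over 3}t+{1\over 4}s+{3\over 2}$ in the spirit of \cite{K2}.

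\textbf{Main obstacle.} I expect the real work to lie in joining the pendant reduction with the degree-$3$ machinery: when a degree-$1$ vertex attaches (directly, or through a chain of degree-$2$ vertices) to a degree-$3$ vertex~$w$, the pendant reduction loses ${1\over 2}$ but the second compensating leaf is not automatic, while the degree-$3$ argument of \cite{K2} tacitly assumes no pendants hang off $w$. The bulk of the paper should therefore be an enumeration of joint reducible configurations coupling degree-$1$, degree-$3$ and high-degree vertices; the sharp infinite families promised in the abstract should emerge as precisely those graphs in which every one of these local reductions is obstructed.
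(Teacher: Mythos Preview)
Your plan has a genuine gap at the ``core case''. You write that once $\delta(G)\ge 3$ the statement reduces to the theorem of \cite{K2}, but the bound proved there is $u(G)\ge \tfrac{2}{5}t+\tfrac{1}{5}s+2$, which does \emph{not} dominate $\tfrac{1}{3}t+\tfrac{1}{4}s+\tfrac{3}{2}$: the difference is $\tfrac{1}{15}t-\tfrac{1}{20}s+\tfrac{1}{2}$, negative as soon as $s>\tfrac{4}{3}t+10$. So for graphs with many degree-$3$ vertices and few of degree $\ge 4$ (which is exactly the regime where the present theorem bites), citing \cite{K2} gives nothing. The one-line charging argument you sketch ``in the spirit of \cite{K2}''---using $\sum_{x\in I(T)}d_T(x)=2(v(G)-1)-|L(T)|$ together with the claim that every degree-$3$ internal vertex has $d_T=3$---does not recover the bound either: that ``extremality constraint'' is simply false for an arbitrary maximum-leaf tree, and no averaging over internal vertices alone can distinguish the coefficients $\tfrac{1}{4}$ and $\tfrac{1}{3}$ with the right additive constant.

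The paper in fact proceeds quite differently. It does carry out a battery of reductions ($R1$--$R6$) in the spirit of your first two steps, but it explicitly notes that reduction alone is not enough. Crucially, it does \emph{not} strip off the pendant vertices; after the reductions it runs the dead-vertices method with a carefully designed potential
\[
\alpha(F)=\tfrac{5}{6}u(F)+\tfrac{1}{6}b(F)-c_G(F)-2(k(F)-1),
\]
starting from a base \emph{forest} $F^*$ that already contains every pendant vertex (together with the sets $W,X,Y$ around them), and proving $\alpha(F^*)\ge \tfrac{5}{3}$ via a separate counting argument. The very obstacle you flag---a pendant hanging off a degree-$3$ vertex, where the $\tfrac{1}{2}$ loss cannot be repaid by a single new leaf---is precisely why the pendants have to be absorbed into the starting configuration rather than peeled away. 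Your proposal correctly locates the difficulty but the resolution you anticipate (more local reductions) is the one the paper tries first and abandons; the missing idea is the forest-based dead-vertices construction with the $\tfrac{5}{6}$--$\tfrac{1}{6}$ weighting.
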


Note, that  all three constants in this bound are optimal. There are graphs for which this bound is tight. 
In the end of this paper we  present infinite series of such graphs, containing only vertices of degrees~1, 3 and~4. 

Our proof uses several methods of construction of a spanning tree with many leaves. 
We use reduction techniques, developed in~\cite{KB}. In the case~$R6$ of the proof we use in reduction
the techniques of deletion of edges of some route, developed by A.\,V.\,Bankevich in~\cite{Ba}. 
However, these methods are significantly modified to take  account of vertices of degree at least~4 separately from
vertices of less degrees. Unfortunately, we cannot prove our theorem only by reduction techniques as in~\cite{KB,Ba}.

In the cases that remain after applying reduction we use the classic method of {\it dead vertices}~\cite{KW, JGM, K2}. 
Since this method does not allow to take into account the vertices of degree~1, we must include all pendant vertices of~$G$ in the base of construction.  Due to this our base construction will be a forest, not a tree as in~\cite{KW, JGM, K2}. 
One can see such idea in the paper~\cite{Gr} by  N.\,V.\,Gravin, but in quite different context.

\section{Proof of Theorem~1}

As usual we assume, that the statement of our theorem is proved for all {\it less graphs} (i.e. graphs with less number of vertices, than $G$, or with the same number of vertices and less number of edges).

Let~$S(G)$ be the set of all vertices of degrees~1 and~3 and~$T(G)$ be the set of all vertices of degree at least~4 in the graph~$G$, 
$s(G)=|S(G)|$,  $t(G)=|T(G)|$.

\begin{defin}
We call by the {\it cost} of a vertex~$x$ in the graph~$G$  the following value~$c_G(x)$: 
$$ c_G(x)= \left\{
\begin{array}{ll}
                {1\over 3}, & \quad \mbox{as} \quad x\in T(G), \\[2pt]
                 {1\over 4},  &\quad \mbox{as} \quad x\in S(G), \\
                 0, &\quad \mbox{otherwise.}
\end{array} \right.
$$

Let $$c(G)={1\over 3} t(G) +{1\over 4}s(G)=\sum_{x\in V(G)} c_G(x) $$
be the {\it cost} of the graph~$G$. 

For any subgraph~$F$ of the graph~$G$ we define its  {\it cost in the graph}~$G$ by
$$c_G(F)=\sum_{x\in V(F)} c_G(x).$$
\end{defin}

We desire to proof the inequality~$u(G)\ge c(G)+{3\over 2}$, which is equivalent to the statement our theorem.

\subsection{ Reduction}

In some cases we will reduce the problem for our graph~$G$ to the same problem for a less graph.
This part is based on the same ideas as in~\cite{KB} with the only difference: now we need to take account of vertices of degree at least~4 separately from cheaper  vertices of degrees~1 and~3.

Let us formulate a definition and two lemmas from the paper~\cite{KB}.

\begin{defin}   
Let  $G_1$ and $G_2$ be two graphs with marked vertices $x_1\in V(G_1)$ and $x_2\in V(G_2)$ respectively, $V(G_1)\cap V(G_2)=\varnothing$. 
To {\it glue} the graphs $G_1$ and $G_2$ by the vertices  $x_1$ and~$x_2$ is to glue together vertices $x_1$ and $x_2$ into one vertex  $x$,  which will be incident to all edges, incident to  $x_1$ or $x_2$ in both graphs 
$G_1$ and $G_2$. All other vertices and edges of the graphs  $G_1$ and $G_2$  will be vertices and edges of the resulting graph.
\end{defin}

\begin{lem}
\label{tool} 
 Let $G_1$ and $G_2$ be connected graphs with  $V(G_1)\cap V(G_2)=\varnothing$, $v(G_1)\ge 2$, 
$v(G_2)\ge 2$  and pendant vertices~$x_1$ and~$x_2$, respectively. Let~$G$ be a graph, obtained  by gluing 
$G_1$ and $G_2$ by the vertices  $x_1$ and $x_2$  and, after that, by contracting  several bridges, not incident to pendant vertices.  Then  $u(G)=u(G_1)+u(G_2)-2$.
\end{lem}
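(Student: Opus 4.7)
The proof naturally splits into two parts: first the pure gluing step (the graph $G^{*}$ obtained from $G_1$ and $G_2$ by gluing alone, before any bridge contractions), then the subsequent bridge contractions.

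First I would analyze the structure of $G^{*}$. The merged vertex $x$ has $d_{G^{*}}(x)=2$ because $x_1$ and $x_2$ were pendant; its two incident edges $xy_1$ and $xy_2$, where $y_i$ is the unique $G_i$-neighbor of $x_i$, are both bridges of $G^{*}$, since deleting $xy_i$ separates the connected graph $G_i-x_i$ from the rest. I would then prove $u(G^{*})=u(G_1)+u(G_2)-2$ via a bijection between spanning trees of $G^{*}$ and pairs $(T_1,T_2)$ of spanning trees of $G_1,G_2$. Forward: merge $T_1$ and $T_2$ at $x$; since $d_{G_i}(x_i)=1$ the vertex $x_i$ is automatically a leaf of $T_i$, so $x$ acquires degree $2$ in the merged tree and its number of leaves is precisely $u(T_1)+u(T_2)-2$. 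Reverse: any spanning tree $T^{*}$ of $G^{*}$ must contain the bridges $xy_1,xy_2$; restricting $T^{*}$ to each side yields an acyclic spanning subgraph $T_i$ of $G_i$, and a standard edge count together with the cut-vertex property of $x$ shows $T_i$ is connected, hence a spanning tree of $G_i$ with $x_i$ as a leaf.

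Next I would show that if $e=uv$ is a bridge of a connected graph $H$ with $d_H(u),d_H(v)\ge 2$, then $u(H)=u(H\cdot e)$. The key observation is symmetric on both sides of the contraction: in any spanning tree $T$ of $H$, restricting to either side of $e$ gives a spanning tree of that side (each side has at least two vertices by the degree assumption), so $u$ and $v$ have degree $\ge 1$ in that restriction and hence $d_T(u),d_T(v)\ge 2$ once the bridge $e$ is added; conversely, in any spanning tree $T'$ of $H\cdot e$ the contracted vertex $w$ must have at least one incident edge inherited from each of the two original stars at $u$ and $v$ (otherwise the ``orphaned'' side would be disconnected from $w$ in $T'$), so $w$ is internal in $T'$ as well. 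Consequently the contraction $T\mapsto T\cdot e$ and the lift that splits $w$ back into $u,v$ and re-adds the edge $e$ are mutually inverse leaf-preserving bijections.

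Iterating the contraction step across all the bridges in the list gives $u(G)=u(G^{*})=u(G_1)+u(G_2)-2$. The only condition to verify along the iteration is that the ``not incident to a pendant'' hypothesis propagates, which is immediate: contracting a bridge between two vertices of degree $\ge 2$ produces a vertex of degree $\ge 2$, and no other vertex has its degree changed. I expect the main conceptual obstacle to be not any single step but the recognition that the degree condition on the bridge endpoints is exactly what forces them, and the contracted vertex on the other side, to be internal in every spanning tree; dropping it would break the lifting direction and allow the contraction to destroy leaves.
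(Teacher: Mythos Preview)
Your argument is correct and self-contained. Note, however, that the paper does not actually prove Lemma~\ref{tool}: it is quoted verbatim from~\cite{KB} (see the sentence ``Let us formulate a definition and two lemmas from the paper~\cite{KB}'' immediately preceding it), so there is no proof in the present paper to compare against.

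As for the substance of your write-up, both halves are sound. For the gluing step, the key point you identify --- that $x$ has degree~$2$ in $G^{*}$ and its two incident edges are bridges, hence lie in every spanning tree --- makes the correspondence between spanning trees of $G^{*}$ and pairs $(T_1,T_2)$ a genuine bijection, and the leaf count $u(T^{*})=u(T_1)+u(T_2)-2$ follows since $x_i$ is forced to be a leaf of $T_i$. For the bridge-contraction step, the observation that a bridge has no vertex adjacent to both endpoints is what guarantees that all degrees other than those of $u,v,w$ are preserved under contraction and lifting; together with your argument that $u,v$ (resp.\ $w$) are internal in every spanning tree, this yields $u(H)=u(H\cdot e)$. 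Your remark on propagation of the non-pendant hypothesis through the iteration is also correct, again because contracting a bridge changes only the degree of the merged vertex.
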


\begin{lem}
\label{tr1}
Let $a,b\in V(G)$ be adjacent vertices, and subgraph $G'$ be a connected component
of the graph $G-a$, which contains the vertex $b$. Let  $b$ be a cutpoint of the graph $G'$.
Then  $u(G)\ge u(G')+1$.
\end{lem}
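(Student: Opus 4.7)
\medskip
\noindent\textbf{Proof proposal.} My plan is to construct an explicit spanning tree $T$ of $G$ whose leaves include all $u(G')$ leaves of an optimal spanning tree of $G'$ plus at least one extra vertex lying outside $V(G')$.

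The key preliminary observation I would establish is that $b$ is never a leaf in a spanning tree of $G'$. Indeed, suppose $T'$ is a spanning tree of $G'$ with $\deg_{T'}(b)=1$. Then $T'-b$ is still a tree, in particular connected; but $T'-b$ spans $G'-b$, which is disconnected because $b$ is a cutpoint of $G'$ --- a contradiction. Hence, once I adjoin the edge $ab$ to any spanning tree of $G'$, none of its leaves are destroyed.

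Next I would fix an optimal spanning tree $T'$ of $G'$ with $u(G')$ leaves and set $H:=G[V(G)\setminus V(G')]$. The subgraph $H$ contains $a$ and is connected, since every component of $G-a$ different from $G'$ must contain a neighbor of $a$ (otherwise $G$ would not be connected). Choose a spanning tree $T_H$ of $H$ and define $T:=T'\cup T_H\cup\{ab\}$. A quick edge count, together with the fact that the single edge $ab$ joins two vertex-disjoint trees, shows that $T$ is a spanning tree of $G$.

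Finally I would count leaves. Every vertex of $V(G')\setminus\{b\}$ has the same degree in $T$ as in $T'$, so all $u(G')$ leaves of $T'$ remain leaves of $T$. To produce one more leaf I split into two subcases: if $V(H)=\{a\}$, then $\deg_T(a)=1$ and $a$ is the extra leaf; otherwise $T_H$ has at least two leaves, at least one of which is different from $a$, and such a leaf retains its degree in $T$. In both cases $T$ has at least $u(G')+1$ leaves, giving the desired inequality.

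I do not foresee a serious obstacle. The argument rests entirely on the elementary fact that a cutpoint of a connected graph cannot have degree one in any spanning tree; the only point demanding a hint of care is the case split between $V(H)=\{a\}$ and $|V(H)|\ge 2$ when exhibiting the additional leaf.
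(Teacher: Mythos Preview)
Your argument is correct. The key observation that a cutpoint of $G'$ cannot be a leaf in any spanning tree of $G'$ is exactly what makes the construction work, and your case split on $|V(H)|$ cleanly produces the extra leaf.

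Note, however, that the present paper does not give its own proof of this lemma: it is quoted verbatim from~\cite{KB} together with Lemma~\ref{tool}, and used as a black box in the reduction rules. So there is no in-paper proof to compare against. Your argument is the natural elementary one --- take an optimal tree of $G'$, glue on a spanning tree of the rest through the edge $ab$, and observe that $b$ was never a leaf to begin with --- and this is essentially how such statements are proved in the literature (including, in all likelihood, in~\cite{KB}). There is nothing missing and no obstacle ahead.
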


{\sf Let us describe Reduction rules. We consider several cases. Considering each case we assume that  condition of any previous case 
does not hold.}

\smallskip
\q{R1}. {\it The graph~$G$ has a  vertex~$a$ of degree~$2$.}

\noindent  
 If~$a$ is a cutpoint then we contract an edge incident to~$a$ and obtain less graph~$G'$ with~$c(G')=c(G)$ and~$u(G')=u(G)$.

If~$a$ is not a cutpoint then  an edge~$ab$ incident to~$a$ is not a bridge and the graph~$G'= G-ab$ is connected. 
Clearly, $c_{G'}(a)-c_{G}(a)={1\over 4}$ and~$c_{G}(b)-c_{G'}(b)\le {1\over 4}$, hence $c(G')\ge c(G)$. 
Since any spanning tree of the graph~$G'$ is a spanning tree of~$G$, then we have $u(G')\le u(G)$. 

In both cases the statement for the graph~$G$ follows from the statement for less graph~$G'$.

\begin{rem}
 In what follows we assume that the graph~$G$ has no vertices of degree~$2$. 
\end{rem}

\smallskip
Let~$U$  be the set of all pendant vertices of the graph~$G$. In this section we assume that $U\ne\varnothing$. We will consider the case when the graph~$G$ has no pendant vertices later.

If  two vertices of~$U$  are adjacent, then the graph contains only these two vertices, for this graph  and the statement of theorem~\ref{u134} is obvious. {\sf Further we consider graphs with more than two vertices, hence,  any two   vertices  of the set~$U$ are  not adjacent.} 

Let  $W\subset V(G)$ be the set of all vertices,  adjacent to pendant vertices. Let~$X\subset V(G)$ be the set of all vertices, adjacent to~$W$, that does not  belong to~$U\cup W$.

Let $H=G-U$. Clearly, the graph~$H$ is connected.

\smallskip
\q{R2}. {\it The graph~$H$ is not biconnected.}

\noindent 
Let~$a$ be a cutpoint of the graph~$H$.   Then $a$ is a cutpoint of the graph~$G$ and there exist connected graphs~$G_1$ and~$G_2$ such that
$$V(G_1)\cup V(G_2)=V(G), \quad V(G_1)\cap V(G_2)=\{a\} \quad \mbox{and} \quad  v(G_1), v(G_2) > 2.$$
For~$i\in\{1,2\}$ we define a graph~$G_i'$, obtained from~$G_i$ as follows: we adjoin a new pendant vertex~$x_i$  to the vertex~$a$ (see fig.~\ref{figa1}).  

Let us glue~$G'_1$ and~$G'_2$ by the vertices~$x_1$ and~$x_2$ (they form new vertex~$x$) and, after that, contract two bridges, incident to~$x$. We obtain the graph~$G$ as a result of these operations.  Note, that two copies of the vertex~$a$ in the graphs~$G'_1$ and~$G_2'$
are contracted into the vertex~$a$ of the graph~$G$. Thus the graphs~$G$, $G_1'$ and~$G_2'$ satisfy all conditions of lemma~\ref{tool} and we have~$u(G)=u(G_1')+u(G_2')-2$.

Since~$d_G(a)=d_{G'_1}(a)+d_{G'_2}(a)-2$, it is easy to see, that~$c_{G}(a)\le c_{G'_1}(a)+ c_{G'_2}(a)$. 
The vertices~$x_1\in V(G_1')$ and~$x_2\in V(G_2')$ does not belong to~$V(G)$, we have~$c_{G_1'}(x_1)=c_{G_2'}(x_2)={1\over 4}$. 
Every vertex of the graph~$G$ except~$a$ belongs to exactly one of the graphs~$G'_1$ and~$G'_2$ and have in this graph the same degree as in the graph~$G$. Hence it follows that~$c(G)\le c(G'_1)+ c(G'_2) -2\cdot {1\over 4}$.

\begin{figure}[!hb]
	\centering
		\includegraphics[width=\columnwidth, keepaspectratio]{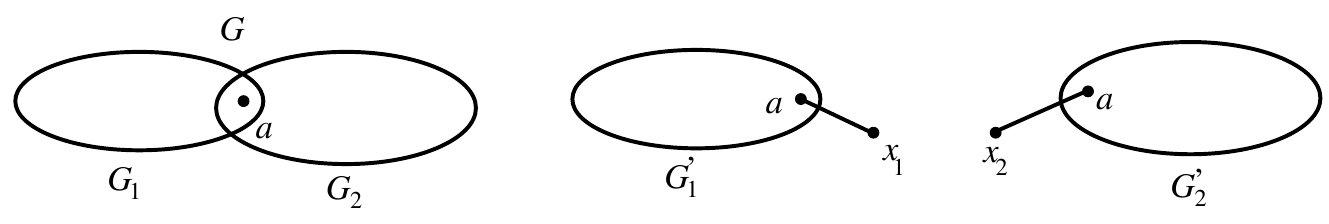}
     \caption{Reduction $R2$}
	\label{figa1}
\end{figure} 

Clearly,    $v(G_1')<v(G)$ and $v(G_2')<v(G)$. Then by induction assumption we have
$u(G'_1)\ge c(G'_1)+{3\over2}$ and  $u(G'_2)\ge c(G'_2)+{3\over2}$. 
Thus, 
$$u(G)=u(G_1')+u(G_2')-2 \ge c(G'_1)+ c(G'_2) -{1\over 2} + {3\over2} \ge c(G) + {3\over2},$$
what was to be proved.

\begin{rem}
In what follows we assume that the graph~$H$ is biconnected. Hence all cutpoints of the graph~$G$ are vertices of the set~$W$. 
Each vertex of the set~$W$ separates pendant vertices adjacent to it from all other vertices of the graph~$G$.
\end{rem}

\smallskip \goodbreak
\q{R3}. {\it There exist adjacent vertices~$x,y\in V(G)$, such that
${d_G(x)\ge 5}$ and~${d_G(y)\ge 5}$.}

\noindent Let us consider the graph~$G'=G-xy$. Since the graph~$H=G-U$ is biconnected then the graph~$G'$ is connected and less than~$G$. Hence the statement of theorem is proved for~$G'$. Clearly, ${d_{G'}(x)\ge 4}$ and~${d_{G'}(y)\ge 4}$,  hence~$c_{G}(x)=c_{G'}(x)$ and~$c_{G}(y)=c_{G'}(y)$.
Consequently,  $c(G')=c(G)$. Since any spanning tree of the graph~$G'$ is a spanning tree of the graph~$G$,  the statement of theorem~\ref{u134} is proved for~$G$.

\smallskip
\q{R4}. {\it There exist adjacent vertices~$a,b\in V(G)$, such that~$b$ is a cutpoint of a connected component~$G'$ of the graph~${G-a}$ and $c(G')\ge c(G)-1$.}

\noindent  Since $b\in \N_{G'}(a)$  is a cutpoint of the graph~$G'$, by lemma~\ref{tr1} we have~$u(G)\ge u(G')+1$. The statement is proved for less connected graph~$G'$,  hence  $$u(G)\ge u(G')+1 \ge c(G')+1+{3\over 2}\ge c(G)+{3\over 2},$$
what was to be proved.

\begin{lem}
 \label{lr4}
If the graph~$G$ satisfies one of the following conditions, then the reduction~$R4$ can be done.

$1^\circ$ There  exist adjacent vertices~$a,b\in V(G)$, such that~$b$ is a cutpoint of a connected component~$G'$ of the graph~$G-a$ and~$d_G(a)\le 3$.

$2^\circ$ There   exist adjacent vertices $w\in W$  and~$x\not \in U$ such that ${d_G(x)= 3}$.

$3^\circ$ There   exist two vertices  $x,y\in U$ adjacent  to a vertex~$w\in W$.

$4^\circ$ There   exists a vertex~$x$ adjacent to~$W$  such that ${d_G(x)\le 6}$ and~$x$ is adjacent to not more than one vertex from~$S(G)$.

$5^\circ$ There   exists a vertex~$x$ adjacent to~$W$  such that ${d_G(x)=4}$ and~$x$ is adjacent to not more than two vertices from~$S(G)$.

\end{lem}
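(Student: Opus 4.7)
The plan is uniform across all five parts: in each case I would exhibit adjacent vertices $(a,b)$ together with a component $G'$ of $G-a$ in which $b$ is a cutpoint, then bound
$$c(G)-c(G') \;=\; c_G(a) \;+\; \sum_{y\notin V(G')\cup\{a\}} c_G(y) \;+\; \sum_{y\in V(G'),\ ay\in E(G)} (c_G(y)-c_{G'}(y))$$
by~$1$. Two accounting observations do almost all the work. Because $G$ has no vertex of degree~$2$ (Remark after R1), a neighbor $y$ of $a$ lying in $G'$ contributes at most ${1\over 4}$ to the last sum, with equality only when $d_G(y)=3$, and at most ${1\over 12}$ when $d_G(y)\ge 4$. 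Because $H=G-U$ is biconnected (Remark after R2), $H-a$ is connected whenever $a\in V(H)$, so every component of $G-a$ other than the big one containing $V(H)\setminus\{a\}$ is a single pendant of $G$ attached to $a$, contributing exactly ${1\over 4}$ to the middle sum.

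Case~$1^\circ$ is then immediate: writing $p$ and $q$ for the numbers of pendant and non-pendant neighbors of $a$, the three pieces sum to at most $c_G(a)+p\cdot{1\over 4}+q\cdot{1\over 4}\le{1\over 4}+{3\over 4}=1$, using $p+q=d_G(a)\le 3$ and $c_G(a)\le{1\over 4}$.

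Cases~$2^\circ$ and $3^\circ$ I reduce to $1^\circ$ by an explicit choice. For $2^\circ$, take $a=x$ (so $d_G(a)=3$) and $b=w$; the component of $G-x$ containing $w$ has $w$ as a cutpoint because it separates $w$'s pendant from the rest. In the degenerate situation $V(H)=\{x,w\}$ the ban on degree~$2$ forces $w$ to own at least two pendants, so $w$ is still a cutpoint of $G'$. For $3^\circ$, take $a=x\in U$ and $b=w$; now $d_G(a)=1$, $G-x$ is connected, and $w$ separates the other pendant $y$ from everything else.

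For $4^\circ$ and $5^\circ$, take $a=x$ and $b=w$ for a neighbor $w\in W$ of $x$. Reading ``$x$ adjacent to $W$'' in the paper's sense $x\in X$ (so $x\notin U\cup W$), $G-x$ is connected, and $w$ is a cutpoint of $G'=G-x$ via its pendant. Every neighbor of $x$ has $G$-degree $\ge 3$; under the hypothesis of $4^\circ$ at most one of them lies in $S(G)$, and under $5^\circ$ at most two. The arithmetic
$${1\over 3}+1\cdot{1\over 4}+5\cdot{1\over 12}=1 \qquad \hbox{and} \qquad {1\over 3}+2\cdot{1\over 4}+2\cdot{1\over 12}=1$$
then closes both cases, using $d_G(x)\le 6$ and $d_G(x)=4$ respectively, and $c_G(x)\le{1\over 3}$.

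The only real obstacle is the cutpoint verification in the small/degenerate configurations above (where $H$ is tiny or $x$ touches $W$ in an awkward way); these are always fixed by invoking the absence of degree-$2$ vertices to force $w$ to carry enough pendants. Everything else is the routine cost-accounting, where the crucial point is the $1/12$ versus $1/4$ distinction between $T(G)$- and $S(G)$-neighbors of $a$ — this is precisely what makes the tight cases $4^\circ$ and $5^\circ$ work.
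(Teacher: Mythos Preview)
Your argument is essentially identical to the paper's: the same cost-accounting identity, the same reduction of $2^\circ$ and $3^\circ$ to $1^\circ$ via $a=x$, $b=w$, and the same ${1\over 12}$-versus-${1\over 4}$ arithmetic closing $4^\circ$ and $5^\circ$. One small caveat: your reading of ``$x$ adjacent to $W$'' as ``$x\in X$'' is narrower than intended---the paper actually invokes $4^\circ$ with $x\in W$ in the proof of Lemma~\ref{lr5}---but your bound goes through unchanged in that case, since a pendant neighbor of $x$ becomes its own component and still contributes exactly ${1\over 4}$ to $c(G)-c(G')$.
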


\begin{proof}
$1^\circ$  Since the graph~$H$ is biconnected, then~$G'$ contains all vertices of the graph~${G-a}$, except pendant vertices, adjacent to~$a$ (in the case~$a\in W$).  Decreasing the degree of a vertex by~1 we decrease its cost by not more than~$1\over 4$.
Hence $$c(G)-c(G')\le c_G(a)+ \sum_{v\in \N_G(a)} (c_G(v)-c_{G'}(v)) \le {1\over 4}+3\cdot{1\over 4}=1$$
and the condition of~$R4$ for the vertices~$a$ and~$b$ holds.

$2^\circ$ and $3^\circ$.  In both cases we consider the connected component~$G'$ of the graph~$G-x$, which contains~$w$.
Clearly, $w$ is a cutpoint of~$G'$, since there is a a pendant vertex  adjacent to~$w$  different from~$x$ and~$w$ separates this pendant vertex  from other vertices of the graph~$G$. Hence, the graph~$G$ satisfies the condition~$1^\circ$ for~$a=x$ and~$b=w$.

$4^\circ$ and~$5^\circ$. Let~$w\in W$ be a vertex adjacent to~$x$. In both cases we consider the connected component~$G'$ of the graph~$G-x$, which contains~$w$. Clearly,~$G'$ contains all vertices of the graph~$G-x$, except adjacent to~$x$ pendant vertices and~$w$ is a cutpoint of the graph~$G'$. 

It remains to show that~$c(G)-c(G')\le 1$. Decreasing the degree of a vertex~$v$ by~1 we decrease its cost by~$1\over 4$ if~$v\in S(G)$
and by not more than~${1\over 3}-{1\over 4}={1\over 12}$ otherwise. Hence in the case~$4^\circ$ we have:
$$c(G)-c(G')\le c_G(x)+ \sum_{y\in \N_G(x)} (c_G(y)-c_{G'}(y)) \le {1\over 3}+{1\over 4} 
+5\cdot{1\over 12} = 1.$$
Similarly, in the case~$5^\circ$ we have:
$$c(G)-c(G')\le c_G(x)+ \sum_{y\in \N_G(x)} (c_G(y)-c_{G'}(y)) \le {1\over 3}+2\cdot {1\over 4} 
+2\cdot{1\over 12} = 1.$$
Now one can see that the condition of~$R4$ for the vertices~$a=x$ and~$b=w$ holds.
\end{proof}

\begin{rem}
 In what follows we assume that the graph~$G$ does not satisfy any of conditions~$1^\circ-5^\circ$ of lemma~\ref{lr4}.
\end{rem}

\begin{lem}
 \label {lr5}
If no of  reduction rule of $R1-R4$ can be applied, then any two vertices of the set~$W$ are not adjacent in the graph~$G$.

\begin{proof}
Suppose the contrary, let there exist two adjacent vertices~${w,w'\in W}$,  $d_G(w)\le d_G(w')$. 
Then~$d_G(w)\le 4$ (otherwise we can apply~$R3$). Moreover, $d_G(w)=4$ (otherwise~$w$ and~$w'$ satisfy the condition~$2^\circ$ of lemma~\ref{lr4}). All vertices adjacent to~$w$, except one pendant vertex,  have degree at least~4, 
otherwise the graph satisfies one of the conditions~$2^\circ$ and~$3^\circ$ of lemma~\ref{lr4}. But in this case the condition~$4^\circ$ of lemma~\ref{lr4} holds for~$x=w$. We obtain a contradiction.
\end{proof}
\end{lem}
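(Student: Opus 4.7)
The plan is to assume for contradiction that some pair $w,w'\in W$ is adjacent, with $d_G(w)\le d_G(w')$, and then squeeze $w$ through the numerical conditions of Lemma~\ref{lr4} until one of them must fire, contradicting the standing assumption that none of $R1$--$R4$ is applicable.

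First I would pin down the degree of $w$. Since no two pendant vertices are adjacent, both $w$ and $w'$ lie outside $U$. Rule $R3$ forbids two adjacent vertices of degree $\ge 5$, so $d_G(w)\le 4$; the remark following $R1$ forbids degree~$2$; and if $d_G(w)=3$ then the pair $(w',w)$ witnesses condition~$2^\circ$ of Lemma~\ref{lr4} (an edge between a vertex of $W$ and a non-pendant vertex of degree~$3$). Hence $d_G(w)=4$.

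Next I would analyze the four neighbors of $w$. By definition of $W$, at least one of them is a pendant $u\in U$; a second pendant neighbor would trigger condition~$3^\circ$ of Lemma~\ref{lr4}, so there is exactly one. The remaining three neighbors of $w$ are non-pendant, and none of them may have degree~$3$, for otherwise condition~$2^\circ$ of Lemma~\ref{lr4} would apply with $w\in W$ adjacent to that vertex. Since degree~$2$ is already excluded, these three neighbors all lie in $T(G)$.

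Now I would apply condition~$4^\circ$ of Lemma~\ref{lr4} to $x=w$: the vertex $w$ is adjacent to $w'\in W$, satisfies $d_G(w)=4\le 6$, and has exactly one neighbor (the pendant $u$) in $S(G)$. This makes $R4$ applicable, contradicting our assumption and completing the proof. The only place that really requires care is the $d_G(w)=3$ elimination, where one must remember that condition~$2^\circ$ is symmetric between the two vertices in the sense that the $W$-role may be filled by $w'$ rather than $w$; everything else is a routine bookkeeping of degrees.
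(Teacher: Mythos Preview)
Your proof is correct and follows essentially the same route as the paper's own argument: pin down $d_G(w)=4$ via $R3$ and condition~$2^\circ$ of Lemma~\ref{lr4}, show that $w$ has exactly one pendant neighbor and three neighbors of degree $\ge 4$ via conditions~$2^\circ$ and~$3^\circ$, and then invoke condition~$4^\circ$ with $x=w$. Your explicit remark that in the $d_G(w)=3$ elimination the $W$-role in condition~$2^\circ$ is played by $w'$ is a helpful clarification that the paper leaves implicit.
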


\begin{rem}
\label{rx}
  In what follows any vertex~$w\in W$ is adjacent to one pendant vertex (from~$U$) and~$d_G(w)-1$ vertices of the set~$X$. In particular, whence it follows that~$X\ne\varnothing$. Since the condition~$2^\circ$ of lemma~\ref{lr4} does not hold, all vertices of the set~$X$ have degree at least~4 in the graph~$G$.
\end{rem}

\goodbreak
\q{R5}. {\it There exists a vertex~$x\in X$, adjacent to vertices~$w,w'$, such that~${w\in W}$,   ${d_G(w)=d_G(w')=3}$ and $\N_G(w')$ contains not more than one vertex from~$S(G)$.}

\begin{rem}
\label{rr55}
 Since $X\cap S(G)=\varnothing$, then  in the case~$w'\in W$ the condition~$R5$ holds.
\end{rem}

\noindent Let~$\N_G(w)=\{x,y,u\}$, where~$u\in U$.  It follows from remark~\ref{rx}, that~${d_G(y)\ge 4}$. 
Clearly, the graph~$G'=G\cdot wx$ is connected.  Let the vertex~$x'\in V(G')$ is the result of contracting vertices~$x$ and~$w$ (see figure~\ref{figa2}).

\begin{rem}
 \label{rcp5}
  {\sf Any cutpoint~$v\ne x'$ of the graph~$G'$ is a cutpoint of the graph~$G$.} Moreover, let~$K$ be a connected component of the graph~$G'-v$. If~$K$ does not contain~$x'$, then~$K$ is a connected component of the graph~$G$. If~$K$ contains~$x'$, then~$K\cup\{x,w\} \setminus\{x'\}$ is a connected component of the graph~$G$.
\end{rem}

\begin{figure}[!hb]
	\centering
		\includegraphics[width=0.7\columnwidth, keepaspectratio]{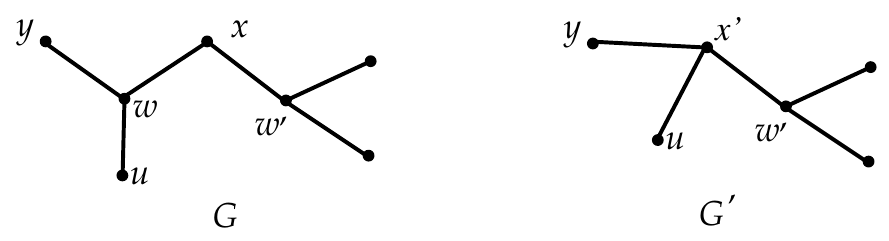}
     \caption{The graphs~$G$ and~$G'$.}
	\label{figa2}
\end{figure}

If~$w'\in W$ then the graph~$G'-w'$ has exactly two connected components, one of them consists of a pendant vertex of the graph~$G'$ adjacent to~$w'$. Let~$G^*$ be another connected component of~$G'-w'$, that contains all other vertices.
If~$w'\not\in W$  then the graph~${G'-w'}$ is connected. In this case let~$G^*=G'-w'$. Clearly, in both cases~$c(G^*)=c(G'-w')$. Since the connected graph~$G^*$ is less then~$G$, for this graph the statement of theorem~\ref{u134} holds.

The vertex~$x'$ is a cutpoint of the graph~$G^*$ (it separates the pendant vertex~$u$), hence by lemma~\ref{tr1} we have
$$u(G)\ge u(G')\ge u(G^*)+1\ge c(G^*)+1 +{3\over 2}.$$ 
It remains to prove that~$c(G^*)=c(G'-w')\ge c(G)-1$. Note, that $d_{G'}(x')\ge 4$, hence $c_{G'}(x')= {1\over 3} = c_{G}(x)$.

After contracting the edge~$xw$ the degree of any vertex~$v$ different from~$x$ and~$w$ is preserved or decreased by one (in the case when~$v$
is adjacent to both vertices~$x$ and~$w$ in the graph~$G$). Hence  only the vertex~$y$ can be a vertex which degree is decreased. Since $y\in X \subset T(G)$ we have $c_{G'}(y)\ge c_{G}(y) -{1\over 12}$. The degrees of any  vertex different from~$x,y,w$ in the graphs~$G$ and~$G'$ coincide.  
Thus  $c(G') \ge c(G) - {1\over 3}$. Since~$d_{G'}(x')\ge4$ then~$S(G)\supset S(G')$.
It remains to note that from $|\N_{G}(w') \cap S(G)|\le 1$ it follows that $|\N_{G'}(w') \cap S(G')|\le 1$. Consequently,
$$c(G'-w')=c(G')-c_{G'}(w')- \sum_{v\in \N_{G'}(w')} (c_{G'}(v)-c_{G'-w'}(v))\ge$$
$$ c(G')- {1\over 4} - \left({1\over 4} + 2\cdot{1\over 12}\right) = c(G')-{2\over 3} \ge c(G)-1, $$
that is enough.

\smallskip
\q{R6}. {\it There exists a vertex~$x\in X$, such that $d_G(x)\le 6$.}

\noindent 
We choose a vertex $x\in X$ of the minimal degree. Since  the condition~$4^\circ$ of lemma~\ref{lr4} does not hold for~$x$, then~$x$ is adjacent to at least two vertices of the set~$S(G)$. Since~$x\not\in W$, these two vertices have degree~3. 
Since we cannot apply~$R5$, at least one of these two vertices does not belong to~$W$ (see remark~\ref{rr55}) and has two neighbors of the set~$S(G)$. Denote this vertex by~$y$, let~$\N_G(y)=\{x,z,z'\}$. Since~$y\notin W$, we have~$d_G(z)=d_G(z')=3$.

Let~$w\in W$  be a vertex adjacent to~$x$.  Consider two cases.

\smallskip
\q{R6.1}. $d_G(x)=4$, $d_G(w)\ge 4$. 

\noindent 
Let $N_G(x)=\{w,y,y_1,y_2\}$. Then  $d_G(y)=d_G(y_1)=d_G(y_2)=3$, since otherwise the condition~$5^\circ$ of lemma~\ref{lr4} holds. 

{\sf Our first aim is to construct in the graph~$G$ a simple path~$P$  from~$y$ to some vertex~$q$  (where $q\not\in S(G)\cup \N_G(x)$ or~$q\in \{y_1,y_2\}$),  such that all inner vertices of the path~$P$ belong to $S(G)\setminus \N_G(x)$ and the graph~$G-E(P)-x$ is connected.}

Consider~$z\in \N_G(y)$, $z\ne x$.  It follows from~$d_G(y)=3$ by remark~\ref{rx} that $y\notin X$, i.e. $y$ cannot be adjacent to the vertex~$w\in W$. Hence $z\ne w$. 
If the edge~$yz$ is a bridge in~${G-x}$, then~$x$ is a cutpoint of the graph~${G-y}$ and the condition~$1^\circ$ of lemma~\ref{lr4} holds for the vertices~$y$ and~$x$. We obtain a contradiction. Then~$yz$ is not a bridge in~${G-x}$.

\begin{figure}[!ht]
	\centering
		\includegraphics[width=0.8\columnwidth, keepaspectratio]{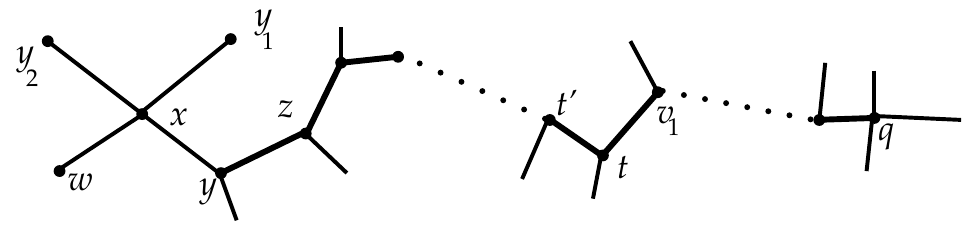}
     \caption{Reduction $R6.1$,  path $P$.}
	\label{figa3}
\end{figure} 

At the beginning let~$P'$ contains only one edge~$yz$.  We know, that the graph~$G-yz-x$ is connected and, hence,~${d_G(z)\ge 3}$.

Let  a path~$P'$ from $y$ to $t$ be built, such that the graph~$G-E(P')-x$ is connected, $d_G(t)\ge 3$  and~$t\ne w$
(at the beginning we have $t=z$ and all these conditions hold). 
Clearly, $t\in S(G)$ (i.e., $d_G(t)=3$) and $t\notin \{y_1,y_2\}$, otherwise the path~$P=P'$ is what we want. 

Let $\N_G(t)=\{t',v_1,v_2\}$, where~$t'$ is the previous vertex of the path~$P'$. Then~$d_G(t')=3$. We try to extend the path~$P'$ by one edge.

Without loss of generality we may assume that~$v_1\ne w$.  Clearly, $v_1\ne x$.
Let~$tv_1$ be a bridge of the graph~$G-E(P')-x$. Then~$t$ is a cutpoint of the graph ${G-E(P')-x-t'}$ (this graph is, obviously, connected, since it is a result of deleting a pendant vertex~$t'$ from the connected graph~$G-E(P')-x$).
Then $$u(G)\ge u(G-E(P')) \ge u(G-E(P')-x-t') +2$$
(we can add two new leaves to a spanning tree of the graph ${G-E(P')-x-t'}$: we adjoin~$x$ to the cutpoint~$w$ and adjoin~$t'$
to the cutpoint~$t$).

Let us estimate  $c(G)-c(G-E(P')-x-t')$. We have deleted from~$G$  the vertices~$x$ and~$t'$ which cost together ${1\over 3}+{1\over 4}$.
Since the cost of vertices of degrees~1 and~3 is the same, then the cost of all vertices of the part~$P'$ besides~$t'$ and~$t$ is the same in both graphs. Thus  deleting  $E(P')$, $x$ and~$t'$   can   decrease the cost  of only 5 vertices: they are three vertices  of ${\N_G(x)\setminus V(P')}$  and two vertices of~$\N_G(t')\setminus V(P')$. Since the cost of each of these 5 vertices is decreased by not more than~$1\over 4$, we have
 $$c(G-E(P')-x-t')\ge c(G)-{1\over 3} - {1\over 4} - 5\cdot {1\over 4}> c(G)-2.$$ It follows from the induction assumption that
$$u(G)\ge u(G-E(P')-x-t') +2 \ge c(G-E(P')-x-t') +2 +{3\over 2}> c(G)+{3\over 2}.$$
In this case the theorem is proved. 

\smallskip
Let us consider the remaining case when~$tv_1$  is not a bridge of the graph ${G-E(P')-x}$. Then~$d_{G-E(P')}(v_1)\ge 2$,
hence, $v_1\not\in V(P')$ and $d_{G}(v_1)\ge 3$.
We extend the path~$P'$ by the edge~$tv_1$ and continue our reasonings with the new path and its end~$v_1$. 
Since our graph is finite, the process will be finished and we obtain the desired path~$P$.

Consider the graph~$G-E(P)-x$. Similarly to proved above, 
$$u(G)\ge u(G-E(P)-x)+1,$$ and it remains to estimate ${c(G)-c(G-E(P)-x)}.$  
We have deleted the vertex~$x$ that costs~$1\over 3$. If~$q\notin \{y_1,y_2\}$ then we have decreased after deleting of~$x$ and~$E(P)$ the cost of three vertices of~$\N_G(x)\setminus\{y\}$  (not more than by~$2\cdot {1\over 4}+{1\over 12}$, since the vertex~$w \in \N_{G}(x)$ does not belong to $S(G)$) and of the vertex~$q$ (by~$1\over 12$).  
If~$q\in\{y_1,y_2\}$ then we have  decreased the cost only of two different from~$y,q$  vertices of~$\N_G(x)$ (not more than by ${1\over 4}+{1\over 12}$).
In both cases we have~${c(G)-c(G-E(P)-x)\le 1}$ and
$$u(G)\ge u(G-E(P)-x) +1 \ge c(G-E(P)-x)+1  +{3\over 2}\ge  c(G)+{3\over 2}.$$

\smallskip
\q{R6.2}. {\it One of the two following conditions holds}:

 --- $d_G(x)=4$ {\it and}~$d_G(w)=3$;

 --- $d_G(x)>4$. 

\noindent As in case~$R5$, we consider the graph~$G'=G\cdot xw$ and the vertex~$x'\in V(G')$, obtained after contracting~$x$ and~$w$. Clearly,~$x'$ is a cutpoint of the graph~$G'$ (it separates a pendant vertex from other vertices). It is easy to see, that $d_{G'}(x')\ge d_{G}(x)\ge  4$: the vertex~$x'$ is adjacent in the graph~$G'$ to all vertices of~$\N_G(x)$ except~$w$ and, in addition, to a pendant vertex from~$\N_G(w)$.

\begin{rem}
\label{rr8}
1) Contracting the edge~$xw$ can decrease (exactly by~1) only degrees of vertices which form a triangle with~$w$ and $x$. All such vertices belong to~$X$ (due to lemma~\ref{lr5}) and have degree at least~4 (otherwise the condition~$2^\circ$ of lemma~\ref{lr4} holds).

2) Hence if $d_G(a)=3$  and~$a\ne w$ then~$d_{G'}(a)=3$.
\end{rem}

If $d_G(w)=3$, then~$c(G') \ge c(G) - {1\over 3}$, as it was proved in~$R5$.

If~$d_G(x)>4$, then all vertices of the set~$X$ have degree at least~5. 
By remark~\ref{rr8} contracting the edge~$xw$ can decrease (exactly by~1) only degrees of some vertices of the set~$X$. Since the degree of any such vertex in the graph~$G$ is at least~5, then its costs in~$G$ and~$G'$ coincide. Thus in this case 

$$c(G)-c(G')=c_G(w)\le {1\over 3}.$$

\smallskip
Remind, that~$y$ is a vertex adjacent to~$x$, such that~$y\notin W$, $d_G(y)=3$, $\N_{G}(y)=\{x,z,z'\}$ and $d_G(z)=d_G(z')=3$ (see the beginning of the case~$R6$). Note, that $\N_{G'}(y)=\{x',z,z'\}$.

{\sf We shall construct in the graph~$G'$ a simple path~$P$  from~$z$ to some vertex~$q$  (where~$q\not\in S(G')\cup \N_G(y)$ or~$q=z'$),  such that all inner vertices of the path~$P$ belong to $S(G')\setminus \N_{G'}(x')$ and the graph~$G'-E(P)-y$ is connected. }

\begin{figure}[!hb]
	\centering
		\includegraphics[width=0.8\columnwidth, keepaspectratio]{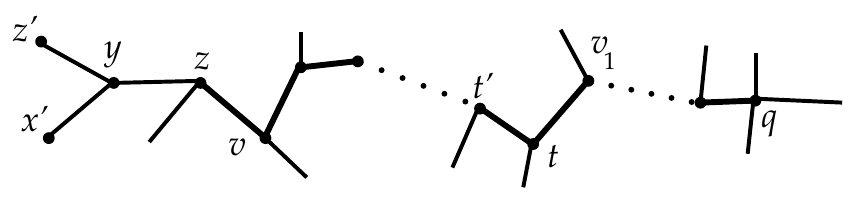}
     \caption{Reduction $R6.2$: the graph~$G'$ and path $P$.}
	\label{figa4}
\end{figure}

Let $v\in \N_{G'}(z)$, $v\not \in\{y,x'\}$.
If the edge~$zv$ is a bridge in~$G'-y$, then~$z$ is a cutpoint of  the graph~$G'-y$. Hence by remark~\ref{rcp5}  the vertex~$z$ is a cutpoint of the graph~$G-y$, then the condition~$1^\circ$ of lemma~\ref{lr4} holds for the vertices~$y$ and~$z$. We obtain a contradiction. 
Then~$zv$ is not a bridge in~$G'-y$.

At the beginning let~$P'$ contain only one edge~$zv$. Note, that the graph ${G'-E(P')-y}$ is connected and~$d_{G'}(v)\ge 3$. 

Let  a path~$P'$ from $z$ to $t$ be built, such that the graph~$G'-E(P')-y$ is connected and~$d_{G'}(t)\ge 3$
(at the beginning we have $t=v$ and all these conditions hold). 
Clearly, $t\in S(G')$ (i.e., $d_{G'}(t)=3$) and $t\ne z'$, otherwise the path~$P=P'$ is what we want. 

Let $\N_{G'}(t)=\{t',v_1,v_2\}$, where~$t'$ is the previous vertex of the path~$P'$ and~$v_1\ne x'$. Then~$d_{G'}(t')=3$. We try to extend the path~$P'$ by one edge.

Let~$tv_1$ be a bridge of the graph~$G'-E(P')-y$. Then~$t$ is a cutpoint of the graph ${G'-E(P')-y-t'}$ (this graph is, obviously, connected, since it is a result of deleting the pendant vertex~$t'$ from the connected graph~$G'-E(P')-y$).
Similarly to item~$6.1$ we obtain, that 
$$u(G)\ge u(G')\ge u(G'-E(P')) \ge u(G'-E(P')-y-t') +2.$$

Let us estimate $c(G)-c(G'-E(P')-y-t')$. As we have proved above, $c(G)-c(G')\le {1\over 3}$.
We have deleted from~$G'$  the vertices~$y$ and~$t'$, that cost together ${1\over 2}$.
Since the cost of vertices of degrees~1 and~3 is the same, then the cost of all vertices of the part~$P'$ besides~$t'$ and~$t$ is the same in both graphs. Thus  deleting~$E(P')$, $y$ and~$t'$ from~$G'$ can   decrease the cost only of 4 vertices: they are two vertices  of~$\N_{G'}(y)\setminus V(P')$  and two vertices of~$\N_{G'}(t')\setminus V(P')$. Since the cost of each of these 4 vertices is decreased by not more than~$1\over 4$, we have
 $$c(G'-E(P')-y-t')\ge c(G')-{1\over 2}- 4\cdot{1\over 4} \ge  c(G)-{11\over 6}.$$ It follows from the induction assumption that
$$u(G)\ge u(G'-E(P')-y-t') +2 \ge c(G'-E(P')-y-t') +2 +{3\over 2}> c(G)+{3\over 2}.$$
In this case the theorem is proved.

Let us consider the remaining case when~$tv_1$  is not a bridge of the graph ${G'-E(P')-y}$. Then~$d_{G'-E(P')}(v_1)\ge 2$,
hence, $v_1\not\in V(P')$ and $d_{G'}(v_1)\ge 3$.
We extend the path~$P'$ by the edge~$tv_1$ and continue our reasonings with the new path and its end~$v_1$. 
Since our graph is finite, the process will be finished and we obtain the desired path~$P$.

Consider the graph~$G'-E(P)-y$. Similarly to proved above, 
$$u(G)\ge u(G')\ge u(G'-E(P)-y)+1$$ and it remains to prove, that 
$$c(G'-E(P)-y)\ge c(G')-{2\over 3}\ge c(G)-1.$$  

We have deleted from~$G'$ the vertex~$y$ that costs~$1\over 4$. If~$q\ne z'$ then we have decreased after deleting~$y$ and~$E(P)$ the cost of two vertices of~$\N_{G'}(y)\setminus\{z\}$  (not more than by~$ {1\over 4}+{1\over 12}$, since the vertex~$x' \in \N_{G'}(y)$ does not belong to $S(G')$) and of the vertex~$q$ (by~$1\over 12$).  
If~$q=z'$ then we have  decreased the cost only of the vertex~$x'$ (not more than by ${1\over 12}$).
In both cases we have
$$c(G')-c(G'-E(P)-x)\le {2\over 3},$$ what was to be proved.

\begin{lem} 
\label{lr8}
Let~$G$ be a connected graph which have pendant vertices, such that~$v(G)\ge 2$ and no reduction rule of~$R1-R6$ can be applied.
Then the graph~$G$ satisfies the three following conditions.

$1^\circ$ Vertices of the set~$X$ are pairwise nonadjacent and have degree at least~$7$.

$2^\circ$  Vertices of the set~$W$ are pairwise nonadjacent and have degree not more than~$4$. 

$3^\circ$  Any pendant vertex of the graph~$G$ is adjacent to a vertex of the set~$W$.
Any vertex of the set~$W$ is adjacent to exactly one pendant vertex of the graph~$G$.
\end{lem}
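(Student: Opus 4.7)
\emph{Proof plan.} My strategy is to verify each of the three conditions by contradiction, showing that its failure would enable one of the forbidden reductions $R1$--$R6$ (or one of the conditions of Lemma~\ref{lr4} that makes $R4$ available). Essentially each clause of the conclusion is precisely what is ruled out by one specific rule.

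For item $1^\circ$ I would first establish the degree lower bound. If some $x \in X$ satisfied $d_G(x) \le 6$, reduction $R6$ would apply, a contradiction; hence every vertex of $X$ has degree at least $7$. Now if two vertices $x, x' \in X$ were adjacent, both would have degree at least $7 \ge 5$, so reduction $R3$ would apply~--- contradiction. Thus vertices of $X$ are pairwise nonadjacent.

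For item $2^\circ$, the pairwise nonadjacency is exactly Lemma~\ref{lr5}. For the upper bound, suppose to the contrary that some $w \in W$ has $d_G(w) \ge 5$. By the remark following Lemma~\ref{lr5}, $w$ is adjacent to exactly one pendant vertex and to $d_G(w) - 1 \ge 4$ vertices of the set $X$; by item $1^\circ$ each such neighbor has degree at least $7$. Choosing any such $x' \in X$, the adjacent pair $(w, x')$ satisfies the hypothesis of $R3$~--- contradiction, so $d_G(w) \le 4$.

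For item $3^\circ$, the first assertion is immediate from the definition of $W$ as the set of all vertices adjacent to pendant vertices. For the second, each $w \in W$ is adjacent to at least one pendant vertex by definition; if $w$ were adjacent to two pendant vertices of $U$, condition $3^\circ$ of Lemma~\ref{lr4} would hold and $R4$ would be available~--- contradiction. The only point that requires care is the ordering within item $1^\circ$: one must first use the absence of $R6$ to raise the degree bound on $X$ to $7$, and only then invoke $R3$ for nonadjacency, since a priori vertices of $X$ only have degree at least $4$ while $R3$ requires degree at least $5$ on both sides.
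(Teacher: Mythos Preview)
Your proof is correct and follows essentially the same route as the paper's: the degree bound on $X$ comes from the absence of $R6$, nonadjacency within $X$ then follows from $R3$; nonadjacency within $W$ is Lemma~\ref{lr5}, and the upper bound $d_G(w)\le 4$ comes from the existence of an $X$-neighbor of degree $\ge 7$ together with $R3$; finally, item~$3^\circ$ uses the definition of $W$ and condition~$3^\circ$ of Lemma~\ref{lr4}. Your explicit remark about the ordering within item~$1^\circ$ (first raise the degree via $R6$, then invoke $R3$) is a nice clarification that the paper leaves implicit in its ``Thus''.
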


\begin{proof}
Since~$v(G)\ge 2$, pendant vertices of the graph~$G$ are pairwise nonadjacent.  Hence any pendant vertex is  adjacent to a vertex of the set~$W$.

Since reduction rule~$R6$ cannot be applied,  vertices of the set~$X$ have degrees at least~7. Thus vertices of the set~$X$ are pairwise nonadjacent. 

 By lemma~\ref{lr5} vertices of the set~$W$ are pairwise nonadjacent.  As we know, these vertices have degree at least~3. Hence any vertex 
of the set~$W$ is adjacent to the set~$X$. Since degrees of vertices of the set~$X$ are at least~7  and reduction rule~$R3$ cannot be applied, then degrees of  vertices of the set~$W$ are not more than~4.
\end{proof}

\subsection{Method of dead vertices}
\label{dead}

Let no reduction rule of~$R1-R6$ can be applied.
In this case we shall build the spanning tree in the graph~$G$ with the help of {\it dead vertices method} (see.~\cite{KW,JGM,K2}).

Our modification is different from the standard method: we begin construction of a spanning tree with a forest (not necessary a tree).
On each step we shall add new vertices to our forest or join its connected components.

For an arbitrary graph~$H$ we denote by~$k(H)$  the number of connected components of the graph~$H$.

Let~$F$ be a forest obtained after several steps of construction ($V(F)\subset V(G)$, $E(F)\subset E(G)$). 
We do not delete  edges of~$E(F)$ from our forest on the next steps.

\begin{defin}
We say that a leaf~$x$ of the forest~$F$ is  {\it dead}, if all vertices of~$N_G(x)$ are vertices of the connected component of the forest~$F$,
which contains~$x$.

We denote by~$b(F)$ the number of dead leaves of the tree~$F$.
\end{defin}

\begin{rem}
It is easy to see, that dead leaves remain dead during all next steps of the construction. 
When the algorithm stops and we obtain a spanning tree, all its leaves will be dead.
\end{rem}

For a forest~$F$ we define
\begin{equation}
\label{alpha}
 \alpha(F)= {5\over 6} u(F) + {1\over6}  b(F) - c_G(F) - 2(k(F)-1).
\end{equation}

Set the notations $T=T(G)$ and~$S=S(G)$. In our case~$V(G)=S\cup T$.

\subsubsection{Beginning of the construction}

Let us describe the beginning of construction of a spanning tree. On this step we shall construct a forest~$F^*$ in~$G$ with~$\alpha(F^*)$ large enough, such that~$V(F^*)$ contains all pendant vertices of the graph~$G$. Consider two cases.

\smallskip
\q{B1}. {\it There are no pendant vertices in~$G$.}

\noindent In this case we assume that there is a vertex~$a\in V(G)$ with~${d_G(a)\ge 4}$ (otherwise our theorem is a consequence of the the paper~\cite{KW}). We begin with the base tree~$F^*$, in which the vertex~$a$ is connected with~4 vertices of~$\N_G(a)$. Then $\alpha(F^*)\ge {5\over 6}\cdot 4 - {1\over 3} \cdot 5= {5\over 3}$.

\smallskip
\q{B2}. {\it There exist pendant vertices in~$G$, i.e. $U\ne\varnothing$.}

 \noindent In this case the statements of lemma~\ref{lr8} hold. 
Let~$Y\subset V(G)$ be the set of all vertices adjacent to~$X$ that do not belong to~$W$. Consider the graph~$G^*$ on the vertex set~$W\cup X\cup U\cup Y$, such that~$E(G^*)$ is the set of all edges of~$G$, incident to~$W \cup X$. It is clear from lemma~\ref{lr8} that $G^*$ is a bipartite graph with partition~${W\cup Y}, {X\cup U}$.

Let~$G'$ be a connected component of the graph~$G^*$. We shall construct in~$G'$ a spanning tree~$F'$ with~$\alpha(F')\ge2$.
At first we consider the subgraph~$G''=G'-Y$, let it has~$k$ connected components.
Different connected components of the graph~$G''$ are connected with each other through vertices of the set~$Y$
(see figure~\ref{figa5}).

\begin{figure}[!ht]
	\centering
		\includegraphics[width=\columnwidth, keepaspectratio]{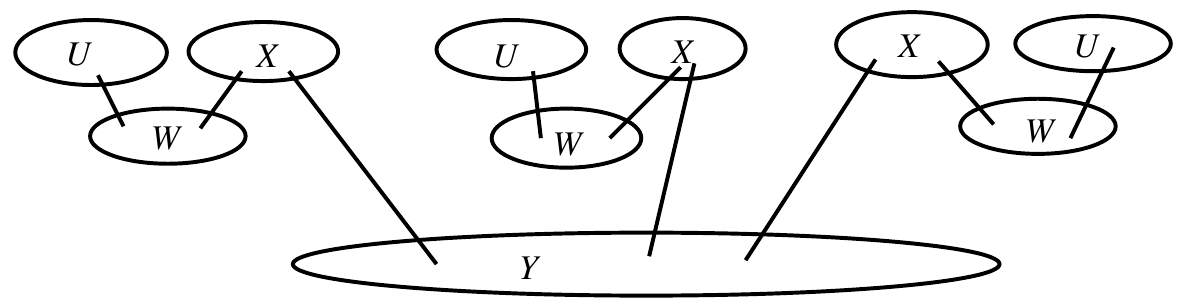}
     \caption{Base $B2$, the graph~$G'$.}
	\label{figa5}
\end{figure}

Let $W'=W\cap V(G')$, $X'=X\cap V(G')$, $Y'=Y\cap V(G')$, $U'=U\cap V(G')$,   $x=|X'|$.
Each vertex of the set~$X'$ is adjacent to at least~7 vertices of~$W'\cup Y'$. Each  vertex of the set~$W'$ is adjacent to exactly one pendant vertex (of the set~$U$) and, consequently, to~2 or~3  vertices of the set~$X'$. Denote by~$W'_2$ and~$W'_3$ subsets of~$W'$, that consists of vertices, adjacent to~2 and~3 vertices of~$X'$ respectively. Clearly,~$W'_2\subset S$ and~$W'_3\subset T$.

Each vertex of the set~$Y'$ is adjacent to a vertex of the set~$X'$, which has degree at least~7. Since reduction rule~$R3$ cannot be applied, 
each vertex of~$Y'$ has degree at most~4, i.e. is adjacent to not more than~4 vertices of~$X'$.
Denote by~$Y'_4$ the subset of~$Y'$, that consists of vertices, adjacent to~4 vertices of~$X'$. Let~$Y'_3=Y'\setminus Y'_4$. Set the notations $w_2=|W'_2|$, $w_3=|W'_3|$, $y_3=|Y'_3|$, $y_4=|Y'_4|$.
Then
\begin{equation}
\label{f1}
 |U'|=w_2+w_3,\quad x \le {2w_2+3w_3+3y_3+4y_4\over 7}. 
\end{equation}

Let us build a spanning forest of~$G'(W'\cup X')$ and after that adjoin to it all vertices of the set~$Y'\cup U'$. We obtain a spanning forest~$F'$ of the graph~$G'$ such that vertices of the set~$Y'\cup U'$ are its leaves. Note, that the number of these leaves is~$w_2+w_3+y_3+y_4$. Clearly, the forest~$F'$ has~$k$ connected components (since the graph~$G''=G'-Y$ has~$k$ connected components). 

We add ${k-1}$ new  edges between~$Y'$ and~$X'$ to the forest~$F'$ such that it becomes a spanning tree~$T'$ of the graph~$G'$. Clearly, after this operation the number of leaves from~$Y'\cup U'$ is decreased  by not more than~$k-1$ and we obtain
$$u(T')\ge w_2+w_3+y_3+y_4-k+1.$$
Let us estimate $\alpha(T')$. We have
$$c_G(T')\le  {1\over 4} \cdot (2w_2+w_3) + {1\over 3} \cdot (w_3 +  x + y_3+y_4).$$
All vertices of the set~$U'$ are dead leaves of the tree~$T'$. Since any vertex of~$Y$ has degree at most~4, then the vertices of~$Y'_4$, that are leaves of the tree~$T'$, are its dead leaves.
Thus, the least possible number of dead leaves of the tree~$T'$ we have in the case, when all leaves of the forest~$F'$, that we loose after joining its connected components, belong to~$Y'_4$. Thus we have 
$$b(T')\ge |U'|+y_4 -k+1 =w_2+w_3+y_4-k+1. $$  
Taking into account inequalities proved above we obtain, that
\begin{equation}
\label{f2}
 \alpha(T')\ge w_2+w_3+y_4+{5\over 6}y_3 - c_G(T')-k+1
\ge w_2\cdot {1\over 2} +w_3\cdot {5\over 12} + y_3\cdot {1\over 2} + y_4\cdot {2\over 3} 
- x\cdot {1\over 3} -k+1. 
\end{equation}

In what follows we  estimate~$k$.  Consider a connected component~$G_0$ of the graph~$G''$. 
Clearly,~$G_0$ contains a vertex~$v\in W'_2\cup W'_3$. 
If~$v\in W'_2$, then~$G_0$ contains at least two vertices of~$X'$. If~$v\in W'_3$ then~$G_0$ contains at least three vertices of~$X'$. Thus, 
\begin{equation}
\label{f3}
  2k \le x. 
\end{equation}

Let~$k_2$ be the number of connected components of~$G''$, which contain two vertices of the set~$X'$. Each such component contains a vertex of~$W'_2$, hence $k_2\le w_2$. Then  
$$x \ge 3(k-k_2)+2k_2= 3k-k_2 \ge 3k-w_2,$$
that we rewrite in the following way:
\begin{equation}
\label{f4}
 3k \le x+w_2. 
\end{equation}

Add inequality~(\ref{f3}) multiplied by~${3\over 2}$ and inequality~(\ref{f4}) and then reduce by~6. We obtain
\begin{equation}
\label{f5}
 k \le {5\over 12}x+{1\over6}w_2. 
\end{equation}

\smallskip
Note, that  $X'\ne\varnothing$, and each vertex of the set~$X'$ is adjacent to at least~7 vertices of~$W'\cup Y'$. Hence
\begin{equation}
\label{f6}
  w_2+w_3+y_3+y_4\ge 7. 
\end{equation}
Consider three cases.

\smallskip
\q{B2.1}. $W'_3=\varnothing$.

\noindent Let us return to inequality~(\ref{f2}). Due to~$w_3=0$ and  inequality~(\ref{f3}) we obtain
$$\alpha(T') \ge w_2\cdot {1\over 2} + y_3\cdot {1\over 2} + y_4\cdot {2\over 3} 
- x\cdot \left({1\over 3} + {1\over 2}\right) +1.$$
Substituting the upper bound on~$x$ from~(\ref{f1}) in the last inequality and taking into account~(\ref{f6}), we obtain
$$ \alpha(T')\ge w_2\cdot {11\over 42}+ y_3\cdot {1\over 7}  + y_4\cdot {4\over  21} +1 
\ge (w_2+y_3+y_4)\cdot {1\over 7} +1 \ge 2, $$
what was to be proved.

\smallskip \goodbreak
\q{B2.2}. $W'_2=\varnothing$, $k=1$.

\noindent  Then by inequality~(\ref{f2}), $w_2=0$ and~$k=1$ we obtain
$$\alpha(T')\ge w_3\cdot {5\over 12} + y_3\cdot {1\over 2} + y_4\cdot {2\over 3} 
- x\cdot {1\over 3}.$$
Substituting the upper bound on~$x$ from~(\ref{f1}) in the last inequality, we obtain
$$ \alpha(T')\ge w_3\cdot {23\over 84}+ y_3\cdot {5\over 14}  + y_4\cdot {10\over  21} 
\ge w_3\cdot {23\over 84} + (y_3+y_4)\cdot {5\over 14}.$$
If $y_3=y_4=0$, then all leaves of the tree~$T'$ are dead, that is $G=G'$.
In this case by inequality~(\ref{f6}) we have~$w_3\ge 7$ and  
$$u(G)\ge u(T')\ge c(G)+{23\over 12},$$ 
the theorem is completely proved.
If $y_3+y_4\ge 1$, then we have $$ \alpha(T')\ge  6\cdot {23\over 84} +  {5\over 14}= 2,$$
that is enough.

\smallskip
\q{B2.3}. $k\ge 2$.

\noindent In the remaining case we assume that~$w_3\ne 0$ (otherwise we get help of the case~$B2.1$).
The graph~$G''$ has at least two connected components and at least one of these components contains vertices of the set~$W'_3$. This component contains at least~3 vertices of the set~$X'$.  Other component contains at least~2 vertices of the set~$X'$.  Hence, $x\ge 5$.
 Substitute in the  inequality~(\ref{f2})   the upper bound on~$k$ from~(\ref{f5}):
$$\alpha(T')\ge w_2\cdot {1\over 2} +w_3\cdot {5\over 12} + y_3\cdot {1\over 2} + y_4\cdot {2\over 3} 
- x\cdot {3\over 4} - w_2\cdot {1\over 6}+1.$$
Due to the upper bound on~$x$ from~(\ref{f1}) and~$x\ge 5$, we have
$$\alpha(T') \ge w_2\cdot {5\over 42} +w_3\cdot {2\over 21} + y_3\cdot {5\over 28} + 
y_4\cdot {5\over 21} +1 \ge x\cdot {2\over 9} +1> 2.$$

\smallskip
We build a spanning tree  in  such way in each connected component of~$G^*$.
{\sf As a result, we have a spanning forest~$F^*$ in the graph~$G^*$. Each connected component~$F'$ of the forest~$F^*$ has~$\alpha(F')\ge 2$. Hence, $\alpha(F^*)\ge 2$.  Note, that~$F^*$ contains all pendant vertices of the graph~$G$.}

\subsubsection{A step of the algorithm}

Let us describe a step~$A$ of the algorithm of construction of spanning tree. Let~$F$ be the forest  we have  before the step~$A$. (Before the first step we have the forest~$F=F^*$ built above.)

Denote by~$\Delta u$ and~$\Delta b$   increase of  number of leaves and dead leaves on the step~$A$ respectively. 
Denote by~$\Delta t$ and~$\Delta s$     number of added on the step~$A$ vertices of the sets~$T$ and~$S$ respectively.
 
Let $F_1$ be the forest obtained after the step~$A$. Set the notation~$\Delta k= k(F)-k(F_1)$.

Let {\it profit} of the step~$A$ be
$$p(A)={5\over 6}\Delta u + {1\over6} \Delta b + 2\Delta k- {1\over3} \Delta t - {1\over4} \Delta s.$$
We will perform only steps with nonnegative profit. 
It follows from the formula~(\ref{alpha}), that $\alpha(F_1)=\alpha(F)+p(A)$.

Let us describe all possible types of a step.
Let~$Z$ denote the set of all vertices which do not belong to~$V(F)$. We call by {\it vertices of level~$1$} all vertices of the set~$Z$, which are adjacent to the set~$V(F)$. For a vertex~$x\in Z$  we denote by~$P(x)$ the set of all vertices from~$V(F)$,   adjacent to~$x$.

\begin{rem}
1)  Note, that any vertex of the set~$Z$ belong to  the set~$S\cup T$  and have degree  at least~3.

2) Estimating  profit of the step we always assume, that all added vertices, which are not known to belong to~$S$, are in the set~$T$.
If some added vertex  belongs to~$S$, then profit of the step will be increased by~$1\over 12$. 
\end{rem}

\smallskip
In what follows we describe several variants of a step of the algorithm.  
{\sf  We try to perform next step of the algorithm in the following way: we pass to the next variant of a step only when all 
previous variants are impossible.} 
 We shall not note this during   description of  steps.

\smallskip
\q{S1}.  {\it There exists an edge~$xy$, which ends belong to different connected components of the forest~$F$.} 

\noindent Then we add the edge~$xy$ to~$F$ and decrease the number of connected components by~1. Thus,~$\Delta k\ge 1$ and~$\Delta u\ge -2$. 
We obtain
$$p(S1) \ge -2\cdot{5\over 6}+2 \ge {1\over 3}.$$

\smallskip \goodbreak
\q{S2}.  {\it There exists a non-pendant vertex~$x\in V(F)$ adjacent to a vertex~${y\in Z}$.} 

\noindent Then we adjoin~$y$ to~$x$. In this case~$\Delta u = 1$, $c_G(y)\le {1\over 3}$ and, hence,
$$p(S2) \ge {5\over 6}-{1\over 3} \ge {1\over 2}.$$

\begin{rem}
Later we assume that edges of~$E(G)\setminus V(F)$ are not incident to non-pendant vertices of~$F$ and cannot join two different connected components of the forest~$F$.
\end{rem}

\smallskip
\q{S3}.  {\it There exists a vertex~$x\in V(F)$, adjacent to two vertices of~$Z$.} 

\noindent We adjoin these two vertices to the vertex~$x$. Then~$\Delta u= 1$,  two added vertex cost not more than~$2\over 3$. Thus
$$p(S3) \ge   {5\over 6}- {2\over 3} \ge {1\over 6}.$$

\smallskip
\q{S4}.  {\it There exists a vertex~$x\in Z$,  adjacent to vertices of two different connected components of the forest~$F$.} 

\noindent Then we join these two components through the vertex~$x$ (we add to the forest the vertex~$x$ and two edges). We obtain~$\Delta u=-2$,  $\Delta k=1$ and, since~$c_G(x)\le {1\over 3}$, then 
$$p(S4) \ge  - 2\cdot {5\over 6}+   2 - {1\over 3} \ge 0.$$

\smallskip \goodbreak
\q{S5}.  {\it There exists a vertex~$x\in Z$, adjacent to $m\ge 3$ vertices of~$V(F)$.} 

\noindent Since the step~$S4$ cannot be performed, the vertex~$x$ is adjacent to three vertices that belong to the same connected component of the forest~$F$. 
We adjoin~$x$ to one of these vertices, two other vertices become dead leaves. Then
$\Delta u=\Delta k=0$, $\Delta b\ge 2$ and, since $c_G(x)\le{1\over 3}$, we obtain
$$ p(S5) \ge   2\cdot{1\over 6} - {1\over 3} =  0. $$

\begin{rem}
 \label{rl1}
In what follows we assume that any pendant vertex of the forest~$F$ is adjacent to exactly one vertex of the set~$Z$ and is not adjacent to vertices of other connected components of~$F$. Any vertex of level~1 is adjacent to not more than two vertices of~$V(F)$ and, if it is adjacent to two vertices of~$V(F)$, they belong to the same connected component of the forest~$F$.
\end{rem}

\q{S6}.  {\it There exists a vertex~$x\in T$ of level~$1$.}

\noindent By remark~\ref{rl1} the vertex~$x$ is adjacent to not more than two vertices of~$V(F)$.
Consider two cases.

\smallskip

\q{S6.1}.  {\it The vertex~$x$ is adjacent to exactly one vertex of~$V(F)$.}

\noindent
Then~$x$ is adjacent to three vertices of the set~$Z$, we adjoin~$x$ and these three vertices to the forest~$F$. 
Thus~$\Delta u=2$. Since the cost of four added  vertices is not more than~$4\cdot {1\over 3}$, we have
$$ p(S6.1) \ge   2\cdot{5\over 6} - {4\over 3} = {1\over 3}. $$

\smallskip \goodbreak
\q{S6.2}.  {\it The vertex~$x$ is adjacent to two vertices of~$V(F)$.}

\noindent 
Then~$x$ is adjacent to two vertices $y_1,y_2\in Z$,  we adjoin~$x,y_1,y_2$ to the forest~$F$ and obtain~$\Delta u=1$. Two vertices of the set~$V(F)$ that are adjacent to~$x$ belong to the same connected component, hence~$\Delta b \ge 1$. Since the cost of three added  vertices is not more than~$3\cdot{1\over 3}=1$, we have
$$ p(S6.2) \ge   {5\over 6} +{1\over 6}- 1 =0.$$

\smallskip

\q{S7}.  {\it There exists a vertex~$x\in S$ of level~$1$, adjacent to exactly one vertex of~$V(F)$.}

\noindent
Then~$x$ is adjacent to two vertices~$y_1,y_2\in Z$. We adjoin~$x,y_1,y_2$ to the forest~$F$ and obtain~$\Delta u=1$. Consider several cases.

\smallskip
\q{S7.1}. $y_1,y_2\in S$.

\noindent  Then the cost of three added  vertices is not more than~$3\cdot{1\over 4}$ and we have 
$$ p(S7.1) \ge   {5\over 6} - 3\cdot{1\over 4} \ge  {1\over 12},$$
that is enough.

\smallskip
\q{S7.2}. $y_1\in T$.

\noindent  Then the cost of three added  vertices is not more than~${1\over 4}+ 2\cdot{1\over 3}={11\over 12}$.

Let~$F_1$ be a connected component of~$F$, which contains a vertex adjacent to~$x$. 
We adjoin to the tree~$F_1$ vertices~$x,y_1,y_2$ and obtain
$$ p(S7.2) \ge   {5\over 6} - {11\over 12} =  -{1\over 12},$$
that is not enough.  Let us analyze what vertices are adjacent to~$y_1$. We consider several cases and continue the step in each case  until its profit becomes nonnegative.

\smallskip
\q{S7.2.1}. {\it The vertex~$y_1$ is adjacent to~$z\in V(F_1)$}.

\noindent Then after the performed step~$z$ becomes a dead leaf of the obtained forest (see figure~\ref{figa6}a), hence~$\Delta b\ge 1$ and we have $ p(S7.2.1) \ge   p(S7.2)  +{1\over 6}={1\over 12},$
that is enough.

\smallskip \goodbreak
\q{S7.2.2}. {\it  The vertex~$y_1$ is adjacent to~$z\in V(F)\setminus V(F_1)$}.

\noindent Then we add to the forest~$F$ the edge~$zy_1$, i.e. we join two connected components of~$F$ (see figure~\ref{figa6}b). We have performed a step~$S1$ in addition to the step performed before, hence
$$ p(S7.2.2) \ge   p(S7.2)  +p(S1)\ge {1\over 4}.$$

\begin{figure}[!ht]
	\centering
		\includegraphics[width=\columnwidth, keepaspectratio]{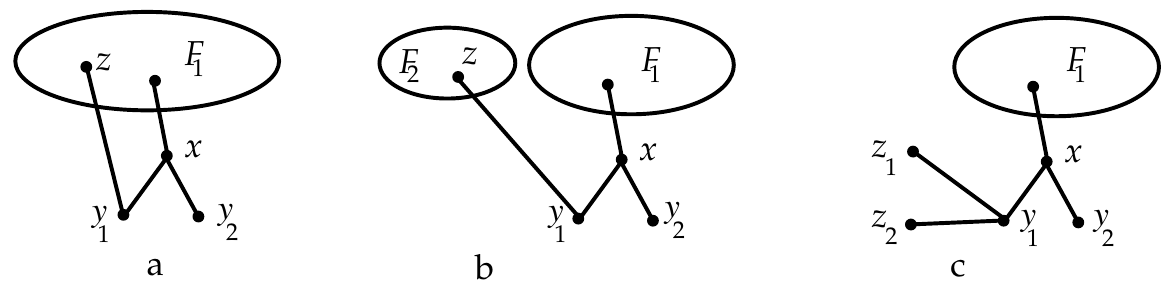}
     \caption{Step $S7.2$.}
	\label{figa6}
\end{figure}

\smallskip \goodbreak
\q{S7.2.3}. {\it The vertex~$y_1$ is not adjacent to~$V(F)$}.

\noindent
Since~$d_G(y_1)\ge 4$, the vertex~$y_1$ is adjacent to two vertices~$z_1,z_2\in Z$. We adjoin~$z_1,z_2$ to~$y_1$,  i.e. we perform a step~$S3$  (see figure~\ref{figa6}c). Hence  $$ p(S7.2.3) \ge p(S7.2) +  p(S3)\ge {1\over 12}.$$

\smallskip

\q{S8}.  {\it There exists a vertex~$x\in S$ of level~$1$, adjacent to two vertices of~$V(F)$.}

\noindent
As it was mentioned before, both vertices of the set~$P(x)$ belong to the same connected component~$F_1$ of the forest~$F$. We add~$x$ to the tree~$F_1$, after that one vertex from~$P(x)$ becomes a dead leaf.
We have~$\Delta s=1$, $\Delta b=1$.

The vertex~$x$ is adjacent to exactly one vertex of the set~$Z$, let it be a vertex~$y$. We add~$y$ to the forest and obtain
$$ p(S8) \ge   {1\over 6} - {1\over 4} -c_G(y) \ge - {1\over 12}-c_G(y).$$
Let us analyze what vertices are adjacent to~$y$. We consider several cases and continue the step in each case until its profit becomes nonnegative.

\smallskip
\goodbreak
\q{S8.1}.  {\it The vertex~$y$ is adjacent to~$V(F)$.}

\noindent Since it is impossible to perform any step of~$S1-S7$ with the vertex~$y$, then~$y\in S$ and $y$ is adjacent to two vertices, that belong to one connected component~$F_2$ of the forest~$F$.  Hence~$c_G(y)={1\over 4}$ and
$$ p(S8) \ge  - {1\over 3}.$$ Consider two cases.

\goodbreak 
\smallskip
\q{S8.1.1}.  $F_2=F_1$.

\noindent
Then both vertices of the set~$P(y)$  and the vertex~$y$ itself became dead leaves (see figure~\ref{figa7}a) 
and we have  
$$ p(S8.1.1) \ge p(S8) + 3\cdot{1\over 6}  ={1\over 6}.$$

\begin{figure}[!ht]
	\centering
		\includegraphics[width=0.6\columnwidth, keepaspectratio]{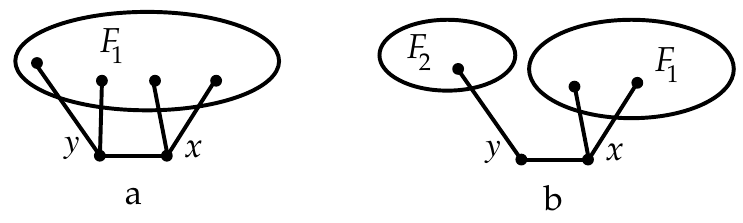}
     \caption{Step $S8.1$.}
	\label{figa7}
\end{figure}

\q{S8.1.2}.  $F_2\ne F_1$.

\noindent Then we add an edge between~$y$ and one vertex from~$P(y)$ and join these two connected components (see figure~\ref{figa7}b). We have performed a step~$S1$, hence
$$ p(S8.1.2) \ge  p(S8) + p(S1) \ge 0.$$

\smallskip

\q{S8.2}.  {\it The vertex~$y$ is not adjacent to~$V(F)$.}

\noindent Then all vertices adjacent to~$y$  except~$x$  are not yet  added to the forest~$F$. Consider two cases.

\smallskip

\q{S8.2.1}.  $y\in T$.

\noindent
Then we can adjoin to the tree~$F_1$ three vertices adjacent to~$y$ (see figure~\ref{figa8}a),  i.e. perform a step~$S3$ and a step~$S2$. Since~${c_G(y)={1\over3}}$,  we obtain $p(S8)\ge -{5\over 12}$ and
$$p(S8.2.1) \ge  p(S8)+ p(S3) +p(S2)\ge {1\over 4}.$$

\smallskip
\q{S8.2.2}.  $y\in S$.

\noindent  In this case we adjoin to the tree~$F_1$ two vertices adjacent to~$y$ (let them be~$z_1$ and $z_2$, see figure~\ref{figa8}b),  i.e. perform a step~$S3$. Since now~${c_G(y)={1\over4}}$ we have $p(S8)\ge -{1\over 3}$ and
$$ p(S8.2.2) \ge  p(S8)  + p(S3) \ge -{1\over 6}.$$
Let us continue case analysis.

\begin{figure}[!ht]
	\centering
		\includegraphics[width=\columnwidth, keepaspectratio]{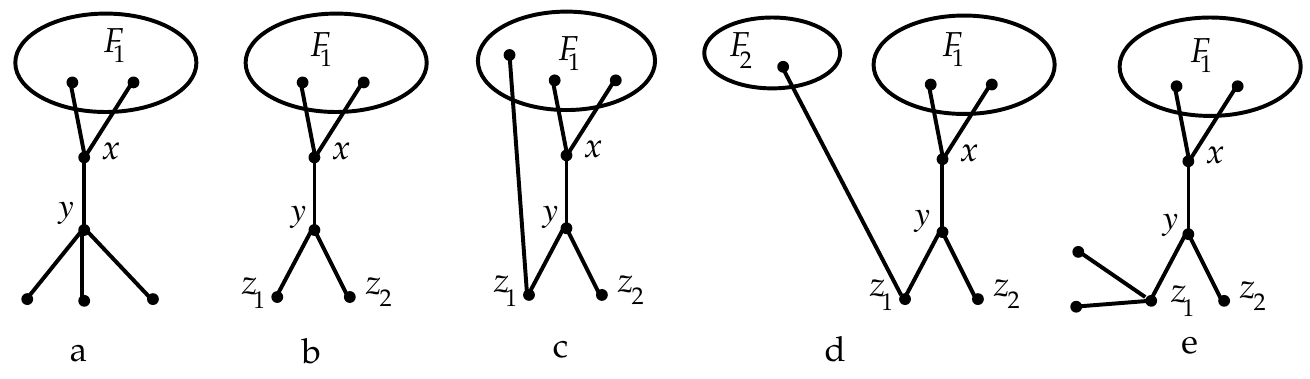}
     \caption{Step $S8.2$.}
	\label{figa8}
\end{figure}

\goodbreak
\q{S8.2.2.1}.  $z_1,z_2\in S$.

\noindent Then two added vertices~$z_1,z_2$ cost together~$2\cdot {1\over 4}$, while there cost was counted as~$2\cdot {1\over 3}$.
Hence $$ p(S8.2.2.1) \ge   p(S8.2.2) + {1\over 6} \ge 0,$$ that is enough.

\smallskip
\q{S8.2.2.2}. {\it The vertex~$z_1\in T$ is adjacent to~$V(F)$.}

\noindent 
If~$z_1$ is adjacent to the tree~$F_1$ (see figure~\ref{figa8}c),  then one extra dead leaf is added to the parameters of step~$S8.2.2$ and we obtain $$ p(S8.2.2.2) \ge   p(S8.2.2) + {1\over 6} \ge 0.$$ 

Let $z_1$ be adjacent to the tree~$F_2\ne F_1$. Then we add to the forest an edge joining~$F_2$ to~$z_1$ (see figure~\ref{figa8}d). We have decreased  the number of connected components of the forest~$F$, i.e. have performed a step~$S1$. Hence
$$ p(S8.2.2.2) \ge   p(S8.2.2) + p(S1)\ge {1\over 6}.$$

\smallskip
\q{S8.2.2.3}. {\it The vertex~$z_1\in T$ is not adjacent to~$V(F)$.}

\noindent  
Since $d_G(z_1)\ge 4$, the vertex~$z_1$ is adjacent to two vertices which are not yet added to the forest~$F$.
We add these two vertices to~$F$ (see figure~\ref{figa8}e), i.e. perform a step~$S3$. Hence
$$ p(S8.2.2.3) \ge   p(S8.2.2) + p(S3)\ge 0.$$

\smallskip
Next lemma will finish the proof of theorem~\ref{u134}.

\begin{lem}
\label{lfa}
Assume that no  step described above can be  performed. Then~$F$ is a spanning tree of the graph~$G$, such that $u(F)\ge c(G)+{5\over 3}$.

\begin{proof}
Assume that~$F$ is not a spanning subgraph of~$G$. Then~$Z\ne \varnothing$ and  there exists a vertex~$a\in Z$ adjacent to~$F$. By the construction we have~$d_G(a)\ge 3$. It is easy to see that then one of the steps can be performed. We obtain a contradiction. Hence~$F$ is a spanning forest of the graph~$G$.

Assume that the graph~$F$ is disconnected. Then it have two connected components which are joined by an edge of~$E(G)\setminus E(F)$ and we can perform a step~$S1$. Hence,~$F$ is connected, i.e. it is a spanning tree of~$G$. Clearly, all leaves of the tree~$F$  are dead and~$k(F)=1$, hence
$$u(F) ={5\over 6}u(F)+{1\over 6}b(F) \ge c_G(F)+ {5\over 3}.$$
\end{proof}
\end{lem}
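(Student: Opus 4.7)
The plan is to verify the two claims of the lemma in turn, exploiting the fact that the profits of all steps $S1$--$S8$ are nonnegative (so $\alpha$ is monotonically non-decreasing throughout the algorithm) and the base bound $\alpha(F^*)\ge \tfrac{5}{3}$ established in cases $B1$ and $B2$.

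First, I would argue by contradiction that $V(F)=V(G)$. Suppose $Z\ne\varnothing$. Since $G$ is connected and $F^*\subset F$ is nonempty, some vertex $a\in Z$ is adjacent to $V(F)$, so $a$ is of level~$1$. By the preceding remarks $d_G(a)\ge 3$ and $a\in S\cup T$, so I would split on which of the following occurs: (i) $a$ is adjacent to vertices in two different components of $F$, in which case $S4$ applies; (ii) $a\in T$, in which case $S6.1$ or $S6.2$ applies depending on $|P(a)|$; (iii) $a\in S$ with $|P(a)|=1$, in which case $S7$ applies; or (iv) $a\in S$ with $|P(a)|=2$, in which case $S8$ applies. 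In every case one of the listed steps is performable, contradicting the stopping hypothesis.

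Second, I would show $F$ is connected. If some pendant vertex $v$ of a component of $F$ is adjacent in $G$ to a vertex outside $V(F)$, step $S2$--$S8$ would be available (already excluded), and by construction every non-pendant vertex of $F$ is saturated in $G$ as well. Hence every edge of $E(G)\setminus E(F)$ lies inside some component of $F$ or between two components. In the latter case $S1$ applies, contradicting the stopping hypothesis. Therefore no edge of $G$ connects two distinct components of $F$; since $G$ is connected and $V(F)=V(G)$, we conclude $k(F)=1$ and $F$ is a spanning tree of~$G$.

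Finally, since $V(F)=V(G)$, every leaf of $F$ has all neighbours inside its (unique) component, so every leaf is dead and $b(F)=u(F)$. Plugging $k(F)=1$, $b(F)=u(F)$, $c_G(F)=c(G)$ into the definition~(\ref{alpha}) gives
\[
\alpha(F)=\tfrac{5}{6}u(F)+\tfrac{1}{6}u(F)-c(G)=u(F)-c(G).
\]
Because the profit of every executed step is nonnegative, $\alpha(F)\ge \alpha(F^*)\ge \tfrac{5}{3}$, so $u(F)\ge c(G)+\tfrac{5}{3}$. The main obstacle is the first step, namely systematically checking that \emph{some} step is applicable for any hypothetical $a\in Z$ adjacent to~$V(F)$; once that exhaustive case check is done, the rest is bookkeeping with $\alpha$.
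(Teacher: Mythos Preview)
Your proposal is correct and follows essentially the same route as the paper: show $V(F)=V(G)$ by exhibiting an applicable step for any level-$1$ vertex, show $k(F)=1$ via $S1$, then read off the bound from $\alpha(F)\ge\alpha(F^*)\ge\tfrac{5}{3}$ using $b(F)=u(F)$. One small gap in your case split: you cover $|P(a)|\le 2$ via $S4$, $S6$, $S7$, $S8$, but you omit the possibility $|P(a)|\ge 3$, which is handled by step~$S5$; add that case and the exhaustive check is complete.
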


\section{Extremal examples.}

Next lemma will help us to glue big extremal examples for the bound of theorem~\ref{u134} from small ones.
The main restriction to the objects of gluing is that they must have pendant vertices. The ideas of the proof of lemma~\ref{tool2} are the same as in the paper~\cite{KB}, but with another cost function~$c(G)$. The proof will be correct for any cost function which count pendant vertices with the coefficient~${1\over 4}$.  This lemma can be interpreted as the second item of lemma~\ref{tool}.

\begin{lem}
\label{tool2} Let~$G_1$ and~$G_2$ be connected graphs with~$v(G_1)>2$, $v(G_2)>2$,  $V(G_1)\cap V(G_2)=\varnothing$ and pendant vertices~$x_1$ and~$x_2$. Let~$G$  be the result of gluing the graphs~$G_1$ and~$G_2$ by vertices~$x_1$ and~$x_2$ (let these vertices be glued together into the vertex~$x$)  and contracting one bridge incident to~$x$. Let  $u(G_1)= c(G_1)+{3\over 2}$ and $u(G_2)= c(G_2)+{3\over 2}$. Then $u(G)= c(G)+{3\over 2}$. 

\begin{proof}
Note, that~$v(G)= v(G_1)+v(G_2)-2$. Indeed, two vertices~$x_1$ and~$x_2$ were glued together into the vertex~$x$. After that we have contracted an edge and, hence, have reduced the number of vertices by~1.  As a result of contraction, the vertex~$x$ of degree~2 has disappeared. 
Thus,   vertices of the graph~$G$  are all different from~$x_1$ and~$x_2$ vertices of  the graphs~$G_1$ and~$G_2$. Moreover, any different from~$x_i$ vertex~$y\in V(G_i)$ has~$d_G(y)=d_{G_i}(y)$. Since the vertices~$x_1\in V(G_1)$ and~$x_2\in V(G_2)$   do not belong to~$V(G)$ and~$c_{G_1}(x_1)=c_{G_2}(x_2)={1\over 4}$,  then we have $c(G)= c(G_1)+ c(G_2) -2\cdot {1\over 4}$.

Let us apply lemma~\ref{tool} and write the chain of equalities:
$$u(G)=u(G_1)+u(G_2)-2 =c(G_1)+c(G_2)+ 1 = c(G) +{3\over2}.$$
\end{proof}
\end{lem}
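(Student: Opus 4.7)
The plan is to verify the lemma by a direct bookkeeping argument: count how many vertices are preserved under the gluing-and-contracting operation, compute what happens to the cost function, then invoke Lemma~\ref{tool} to get the number of leaves.

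First I would track the vertex set. Gluing $x_1$ and $x_2$ into a single vertex $x$ reduces the vertex count by one; contracting a bridge incident to $x$ reduces it by one more (since $x$ has degree~$2$ at that point — it inherited one neighbor from each of $x_1$ and $x_2$, both of which were pendant), so $v(G) = v(G_1) + v(G_2) - 2$. Moreover, for every vertex $y \ne x_1, x_2$, we have $d_G(y) = d_{G_i}(y)$, because the only place a degree could change is at a neighbor of the contracted bridge, and $x$ has degree exactly~$2$ so contraction preserves the degree of its other endpoint.

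Next I would translate this into the cost function. Since the costs of the surviving vertices are unchanged and the two pendant vertices $x_1, x_2$ (each costing $\tfrac14$) are absent from $V(G)$, we obtain $c(G) = c(G_1) + c(G_2) - \tfrac12$. Finally I would apply Lemma~\ref{tool} (whose hypotheses hold: both graphs have pendant vertices $x_i$ and at least two vertices, and $G$ arises by gluing and contracting a bridge not incident to a pendant vertex) to conclude $u(G) = u(G_1) + u(G_2) - 2$. Combining with the hypothesis $u(G_i) = c(G_i) + \tfrac32$ gives
\[
u(G) = \bigl(c(G_1) + \tfrac32\bigr) + \bigl(c(G_2) + \tfrac32\bigr) - 2 = c(G_1) + c(G_2) + 1 = c(G) + \tfrac32,
\]
which is the desired identity.

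The only subtle point — and the place I would check carefully — is that the bridge being contracted is indeed not incident to a pendant vertex of the glued graph, so that Lemma~\ref{tool} applies. This is built into the construction: the vertex $x$ formed by gluing has degree~$2$, so an edge incident to $x$ is a bridge between two non-pendant portions (coming from $G_1$ and $G_2$, each of which had $v(G_i) > 2$). Beyond this, everything is routine arithmetic, so I do not expect any real obstacle.
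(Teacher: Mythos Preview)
Your proof is correct and follows essentially the same route as the paper's: compute $v(G)=v(G_1)+v(G_2)-2$, observe that all surviving vertices keep their degrees so $c(G)=c(G_1)+c(G_2)-\tfrac12$, then apply Lemma~\ref{tool} and the hypothesis $u(G_i)=c(G_i)+\tfrac32$ to get the result. Your extra verification that the contracted bridge is not incident to a pendant vertex (needed for Lemma~\ref{tool}) is a detail the paper leaves implicit, and your justification via $v(G_i)>2$ is correct.
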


It remains to construct a graph for which the bound of theorem~\ref{u134} is tight, that contains nat least two pendant vertices.

\begin{figure}[!ht]
	\centering
		\includegraphics[width=\columnwidth, keepaspectratio]{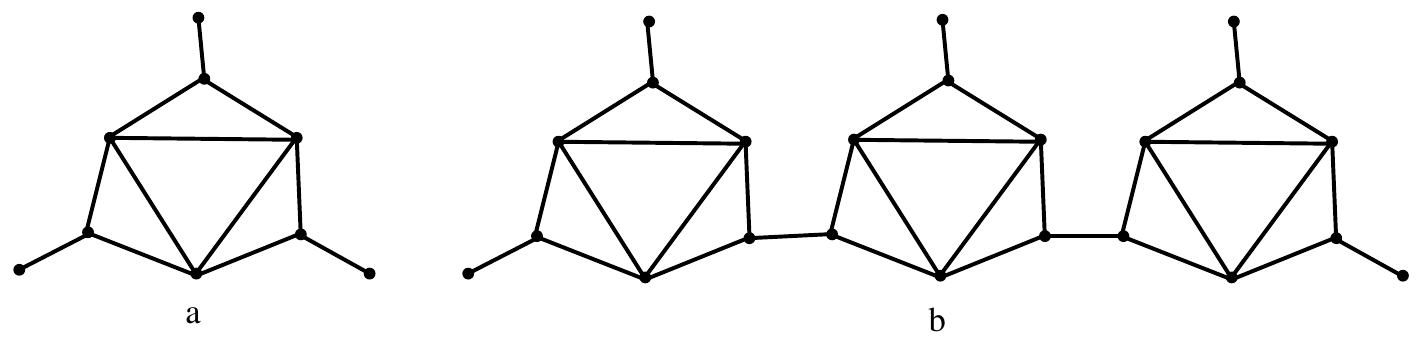}
     \caption{Extremal examples.}
	\label{figa9}
\end{figure}

We show such a graph~$G$ on figure~\ref{figa9}a. This graph contains three vertices of each of  degrees~1, 3 and~4, hence
$$ t=3, \quad  s=6, \quad \mbox{and} \quad c(G)=3\cdot {1\over 3}+ 6\cdot {1\over 4}={5\over 2}. $$
Clearly,~$u(G)=4$. Indeed, the set of leaves of any spanning tree of~$G$ contains three pendant vertices of~$G$ and can contain  at most one vertex of degree~4. Thus, $u(G)=4=c(G)+{3\over 2}$.
Then by lemma~\ref{tool2} we can construct arbitrary long chains of graphs~$G$ (see figure~\ref{figa9}b) and the lower bound on the number of leaves of theorem~\ref{u134} is tight for all such graphs.

\end{document}